\newtheorem{theorem}{Theorem}[section]
\newtheorem{lemma}[theorem]{Lemma}
\newtheorem{proposition}[theorem]{Proposition}
\newtheorem{corollary}[theorem]{Corollary}
\newtheorem{problem}{Problem}
\newtheorem{openprob}{Open Problem}
\journal{}
\begin{document}

\begin{frontmatter}

  \title{Total $k$-domination in Cartesian product of complete graphs}

  \address[a]{Department of Mathematics and Statistics, Florida International University, 11200 SW 8th Street, Miami, FL 33199 U.S.A.}

  \author[a]{Walter Carballosa \corref{x}}
  \ead{wcarball@fiu.edu} \cortext[x]{Corresponding author.}

  \author[a]{Justin Wisby}
  \ead{jwisby@fiu.edu}

  \begin{abstract}
    Let $G=(V,E)$ be a finite undirected graph. A set $S$ of vertices in $V$ is said to be total $k$-dominating if every vertex in $V$ is adjacent to at least $k$ vertices in $S$.
    The total $k$-domination number, $\gamma_{kt}(G)$, is the minimum cardinality of a total $k$-dominating set in $G$.
    In this work we study the total $k$-domination number of Cartesian product of two complete graphs which is a lower bound of the total $k$-domination number of Cartesian product of two graphs. We obtain new lower and upper bounds for the total $k$-domination number of Cartesian product of two complete graphs. Some asymptotic behaviors are obtained as a consequence of the bounds we found. In particular, we obtain that $\displaystyle\liminf_{n\to\infty}\frac{\gamma_{kt}(G\Box H)}{n}\leq 2\,\left(\left\lceil\frac{k}{2}\right\rceil^{-1}+\left\lfloor\frac{k+4}{2}\right\rfloor^{-1}\right)^{-1}$ for graphs $G,H$ with order at least $n$. We also prove that the equality is attained if and only if $k$ is even. The equality holds when $G,H$ are both isomorphic to the complete graph, $K_n$, with $n$ vertices.
    Furthermore, we obtain closed formulas for the total $2$-domination number of Cartesian product of two complete graphs of whatever order. Besides, we prove that, for $k=3$, the inequality above is improvable to $\displaystyle\liminf_{n\to\infty} \gamma_{3t}(K_n\Box K_n)/n \leq 11/5$.
  \end{abstract}

  \begin{keyword}
    Total dominating set \sep total domination number \sep Cartesian product \sep complete graph
    \MSC[2020] 05C69 \sep 05C76
  \end{keyword}
\end{frontmatter}

\section{Introduction}
\label{sec:intro}
We begin by stating the terminology.
Throughout this paper, $G=(V,E)$ denotes a simple graph of order $|V|=n$. 
We denote two adjacent vertices $u$ and $v$ by $u\sim v$. For a nonempty set $X\subseteq V$ and a vertex $v\in V$ the degree of $v$ in $ X$ will be denoted by $d_{X}(v)=\left|\{u\in X: u\sim v\}\right|.$

A set $S$ of vertices in $V$ is said to be $k$-\emph{dominating} if  every vertex $v\in V \setminus S$ satisfies $d_S(v) \ge k$. The $k$-\emph{domination number}, $\gamma_k(G)$, is the minimum cardinality of a $k$-dominating set in $G$.
A set $S\subset V$ is said to be \emph{total $k$-dominating} if every vertex in $V$ is adjacent to at least $k$ vertices in $S$.
The \emph{total $k$-domination number}, $\gamma_{kt}(G)$, is the minimum cardinality of a total $k$-dominating set in $G$.
The notion of total domination in a graph was introduced by Cockayne, Dawes and Hedetniemi in \cite{CDH}. The total domination number ($k=1$) has also been studied in Cartesian product of graphs in \cite{BHKM,HR}.

The most famous open problem about domination in graphs is the Vizing's conjecture, see \cite{Vi}. This conjecture states that the dominating number of the Cartesian product of two graphs is greater than or equal the product of the dominating number of both factor graphs. Domination and some well-known variations have continuously been studied, see \emph{e.g.} \cite{ BAEK,Ha,HHS,He,KCB} and the references therein.
We recall that the Cartesian product of two graphs $G = \big(V (G),E(G)\big)$ and
$H = \big(V (H),E(H)\big)$ is the graph $G \Box H = (V,E)$, such that $V = \{(u,v) : u \in V(G),v \in V(H)\}$ and two vertices $(u_1,v_1),(u_2,v_2) \in V$ are adjacent in $G\Box H$ if and only if, either the case where $u_1= u_2$ and $v_1\sim v_2$ or the case where $v_1=v_2$ and $u_1\sim u_2$.
From this definition, it follows that the Cartesian product of two graphs is commutative.
When we refer to the Cartesian product of complete graphs, $K_n\Box K_m$, we denote $V(K_n):=\{v_1\ldots,v_n\}$ and $V(K_m):=\{w_1\ldots,w_m\}$.

The first approach to domination in graph appears within the problem of the five queens, \emph{i.e.}, place five queens on a chessboard so that every free square is dominated by at least one queen\footnote{One sample solution is queens on d4, e7, f5, g8, h6.}. Note that the solutions to this problem are dominating sets in the graph whose vertices are the $64$ squares of the chessboard and vertices $a, b$ are adjacent if a queen may move from $a$ to $b$ in one move or queen occupies vertex $a$.
More recently, a problem on total domination appeared as Questions 3 of the 40\textsuperscript{th} International Mathematical Olympiad, which was equivalent to determining the total domination number of the Cartesian product of two path graphs with same even order, \emph{i.e.}, $\gamma_{t}(P_{2n}\Box P_{2n})$.
Several authors have studied the total domination of product of graphs like Cartesian, strong and lexicographic, see \emph{e.g.} \cite{BHS,BJS,CK,LH}.
Some of the works associated to domination number in graphs study a problem with a natural presentation in a rectangular board involving chess pieces, for instance, one of the famous problem relative to domination in graph has recently been solved. Michael Simkin has solved the $n$-queens problem, see \cite{Si}; it is to determine $\mathcal{Q}(n)$, the number of ways to place $n$ mutually non-threatening queens on an $n\times n$ board.

Sometimes, throughout this work, we refer to the following equivalent problem in an $n\times m$ board which could be a convenient tool to visualize and obtain $\gamma_{kt}(K_n\Box K_m)$ for $k\geq2$.

\begin{problem}\label{prob:board}
Determine the minimum number of chess-rooks\footnote{Reminder that rooks, in the game of chess, are major pieces which may move horizontally or vertically to any other square in their rank (row) or file (column).} placed at distinct squares of an $n\times m$ board such that each cell is dominated by at least $k$ rooks considering that no rook dominated the square where it is placed and no rook obstructs other rooks.
\end{problem}

Clearly, the solution of Problem \ref{prob:board} is $\gamma_{kt}(K_n\Box K_m)$, and consequently, each rook configuration that gives a solution of the problem is a minimum total $k$-dominating set of $K_n\Box K_m$.
Furthermore, a total $k$-dominating set of $K_n\Box K_m$ provides a configuration of rooks that satisfies the Problem \ref{prob:board}, too.
Note that for every two graphs $G,H$ with orders $n,m$, respectively, $\gamma_{kt}(K_n\Box K_m)$ is a natural lower bound of $\gamma_{kt}(G\Box H)$ for every $k\ge1$ since $G\Box H \subseteq K_n\Box K_m$, \emph{i.e.}, $\gamma_{kt}(K_n\Box K_m) \le \gamma_{kt}(G\Box H)$. Indeed, every closed formula obtained in this work is a sharp lower bound for the Cartesian product of two graphs with respective orders.

In this paper we discuss the total $k$-domination number of Cartesian product of two complete graphs $\gamma_{kt}(K_n\Box K_m)$ for $k\ge2$.
In Section \ref{sect:k}, we study $\gamma_{kt}(K_n\Box K_m)$ for every $k\ge2$. In Section \ref{sect:k=2}, we deal with the particular case $k=2$, and in Section \ref{sect:k=3} we deal with $k=3$.
Throughout this paper, we obtain lower and upper bounds for $\gamma_{kt}(K_n\Box K_m)$ for $k\ge2$ that improve the bounds in \cite{BSS}, see, \emph{e.g.}, Theorems \ref{th:rows-cols}, \ref{th:k_even}, \ref{th:nm}, and \ref{prop:nn3}.
In addition, we obtain a double recurrence formula given in Theorems \ref{th:+4} and \ref{th:+13} with initial conditions given in Theorems \ref{th:nn} and \ref{th:nn+1} which easily calculates  $\gamma_{2t}(K_n\Box K_m)$ for every $n,m\ge2$, see Theorem \ref{th:nm} and Table \ref{closed_formula}.
Although, we didn't find closed formulas for $\gamma_{3t}(K_n\Box K_m)$ we obtain some sharp inequalities that might show that $k=3$ ($k$ odd, resp.) looks like to be distinct to the results for $k=2$ ($k$ even, resp.). The main result in Section \ref{sect:k=3} are associated to the asymptotic behavior of $\gamma_{3t}(K_n\Box K_n)/n$ as $n$ goes to infinity.
Furthermore, the results in this work deduce asymptotic behavior for $\gamma_{kt}(K_n\Box K_m)$ in comparison to $n$, see Theorems \ref{th:rows-colsnxn}, \ref{th:behavior_nm_even}, \ref{th:lowerbound}, \ref{th:behaviornm} and \ref{th:behaviork3}.

\
\section{Total $k$-domination number of $K_n\Box K_m$}\label{sect:k}

In order to obtain main results, we first collect some results through technical lemmas that will prove useful.

\begin{lemma}\label{l:UpperB}
  For every $2\le k\le n\le m$,
  \begin{equation}\label{eq:UpperB}
    \left\lfloor \frac{k(n+1)}2\right\rfloor\,+1 \le \gamma_{kt}(K_n\Box K_m) \le k\,n.
  \end{equation}
\end{lemma}

\begin{proof}
  Without loss of generality, we can assume that $m\ge n$ as the Cartesian product of graphs commute. On the one hand, consider $S\subset V(K_n\Box K_m)$ a total $k$-dominating set of $K_n\Box K_m$.
  Since $d_{K_n\Box K_m}(v)=n+m-2$ for every $v\in S$ and $d_{S}(u)\ge k$ for every $u\in V(K_n\Box K_m)$, we have $|S| (n+m-2) \ge k\,nm$.
  Thus, we have
  \[
    |S| \ge \frac{k\,nm}{n+m-2} =  \frac{k}2 \,(n+1) +    \frac{k(m-n)(n-1)+2k}{2(n+m-2)}\ge \frac{k(n+1)}2 +\frac{k}{n+m-2}.
  \]
  In order to obtain the second inequality, it suffices to choice $S:= V(K_n)\times \{w_1,\ldots,w_k\}$ for distinct vertices $w_1,\ldots,w_k$ in $K_m$, since $S$ is a total $k$-dominating set of $K_n\Box K_m$.
\end{proof}

\begin{lemma}\label{l:rooks}
  In every rook configuration of a Problem \ref{prob:board} solution for $2\le k\le n\le m$, there is a row or a column with at least $\left\lceil\frac{k+2}{2}\right\rceil$ rooks.

  Furthermore, if $\gamma_{kt}(K_n\Box K_m)<k\,\min\{n,m\}$ then there is at least one rook in each row and column.
\end{lemma}

\begin{proof}
  Consider a rook placed in a square on board. The square must be dominated by at least $k$ other rooks, thus the number of rooks in its row plus the number of rooks in its column must be at least $k+2$, so the result follows.

  On the other hand, if there is a row (column, resp.) with no rook then the squares in that row (column, resp.) must be dominated by at least $k$ distinct rooks.
\end{proof}

The following result states that in every configuration of rooks satisfying the condition of Problem \ref{prob:board} contains at least $\gamma_{kt}(K_r\Box K_s)$ rooks within every $r$ rows and $s$ columns for $r,s\ge k$.

\begin{lemma}\label{l:rows_files}
  Let $A,B$ be an $r$-set of $V(K_n)$ and an $s$-set of $V(K_m)$, respectively, with $2\le k< r\le n$ and $k< s\le m$. If $S$ is a total $k$-dominating set of $K_n\Box K_m$, then $\left|S \cap \big[ (A\times V(K_m))\cup(V(K_n)\times B) \big] \right| \ge \gamma_{kt}(K_r\Box K_s)$.
\end{lemma}

\begin{proof}
  Without loss of generality, we can assume that $A=\{v_1,\ldots,v_r\}$ and $B=\{w_1,\ldots,w_s\}$.
  Note that if $r=n$ and or $s=m$ the result is obvious.
  Thus we can assume that $r<n$ and $s<m$.
  Let us consider $V_{1}:= \{v_1,\ldots,v_r\}\times\{w_1,\ldots,w_s\}$, $V_{2}:= \{v_1,\ldots,v_r\}\times\{w_{s+1},\ldots,w_{m}\}$, $V_{3}:=\{v_{r+1},\ldots,v_{n}\} \times\{w_1,\ldots,w_s\}$ and $V:=V_1\cup V_2\cup V_3$.
  Without loss of generality, we can assume that $r\le s$.

  On the one hand, assume that $S\cap V_1=\emptyset$.
  Hence, since $d_{S\cap(V_2\cup V_3)}\big((v_i,w_i)\big)=d_{S}\big((v_i,w_i)\big)\ge k$ for every $1\le i\le r$ and $\left\{N\big((v_i,w_i)\big)\cap\big(V_2\cup V_3\big)\right\}_{i=1}^{r}$ is a set of pairwise disjoint subsets of $V_2\cup V_3$, we have $\left|S\cap(V_2\cup V_3)\right|\ge kr \ge\gamma_{kt}(K_r\Box K_s)$.

  On the other hand, assume that $S\cap V_1\neq\emptyset$.
  Define $f_i:=\left|S\cap \left[\{v_i\}\times\{w_1,\ldots,w_m\}\right]\right|$ for $1\le i\le r$ and $c_j:=\left|S\cap \left[\{v_1,\ldots,v_n\}\times\{w_j\}\right]\right|$ for $1\le j\le s$.
  Without loss of generality we can assume that $f_1\le f_2\le\ldots\le f_r$ and $c_1\le c_2\le\ldots\le c_s$.
  Note that $f_i+c_j\ge k$ for every $1\le i\le r$ and $1\le j\le s$; moreover, if $(v_i,w_j)\in S$ we have $f_i+c_j\ge k+2$.
  Besides, if $f_1+c_1>k$, then $f_i+c_j>k$ for every $1\le i\le r$ and every $1\le j\le s$, and consequently, we can swap every element $(v_x,w_y)$ in $S\cap V_2$ ($S\cap V_3$, resp.) with an element in $\overline{S}\cap V_1$ remaining the total $k$-domination in $V_1$.


  \begin{figure}[H]
    \centering
    \begin{tikzpicture}[x=.75in, y=.75in]
      \draw[thick] (0,0)rectangle(3,3);
      \draw[line width=1.5pt,dashed] (0,2)--(3,2) (1,3)--(1,0);
      \draw[thick] (0,1.25)--(3,1.25) (1.75,3)--(1.75,0);
      \draw[line width=.75pt,decorate,decoration={brace,amplitude=5pt}] (.025,3.03) -- node[above=5pt] {$q-1$} (.975,3.03);
      \draw[line width=.75pt,decorate,decoration={brace,amplitude=5pt}] (1.025,3.03) -- node[above=5pt] {$s-q+1$} (1.725,3.03);
      \draw[line width=.75pt,decorate,decoration={brace,amplitude=5pt}] (-.03,2.025) -- node[left=5pt] {$p-1$} (-.03,2.975);
      \draw[line width=.75pt,decorate,decoration={brace,amplitude=5pt}] (-.03,1.275) -- node[left=5pt] {$r-p+1$} (-.03,1.975);
    \end{tikzpicture}
    \caption{Auxiliary subdivision for a total ${k}$-dominating set in $K_n\Box K_m$.} \label{fig_Lemma}
  \end{figure}

  Now we assume that $f_1+c_1=k$. Since $S\cap V_1\neq\emptyset$ there exists $1< p\le r$ such that  $f_1=f_{p-1}<f_p$, or $1< q\le s$ such that $c_1=c_{q-1}<c_q$. Thus, we have a configuration like the one in Figure \ref{fig_Lemma}. Consider that both $p$ and $q$ exist, and define

  \begin{table}[H]
    \centering
    \begin{tabular}{l l}
      $V_{11}:=\{v_1,\ldots,v_{p-1}\}\times\{w_1,\ldots,w_{q-1}\}$ , & $V_{12}:=\{v_1,\ldots,v_{p-1}\}\times\{w_q,\ldots,w_{s}\}$, \\
      $V_{21}:=\{v_p,\ldots,v_{r}\}\times\{w_1,\ldots,w_{q-1}\}$,    &
      and $V_{22}:=\{v_p,\ldots,v_{r}\}\times\{w_q,\ldots,w_{s}\}$\\
    \end{tabular}
  \end{table}

  Perhaps, some of the set $V_{12},V_{21},V_{22}$ can be empty-set, but not all since at least $p$ or $q$ exists. Note that $S\cap V_{11}=\emptyset$ since $f_i+c_j= k$ for every $1\le i<p$ and every $1\le j<q$. Therefore, we can swap elements $(v_x,w_y)$ in $S\cap V_2$ ($S\cap V_3$, resp.) with elements in $\overline{S}\cap (V_{12}\cup V_{22})$ ($\overline{S}\cap (V_{21}\cup V_{22})$, resp.) remaining the total $k$-domination in $V_1$. Notice that if the swapping described above empties both $V_2$ and $V_3$, then the result follows.
  Define $S^\prime$ as the new total $k$-total dominating set (at least on $V_1$) obtained after the swapping described above.
  Assume the swapping left a remainder in $V_2$ ($V_3$, resp.), then $V_{22}\subseteq S$ and $V_{12}\subseteq S$ ($V_{21}\subseteq S$, resp.).
  Note that if $p=2$ or $q=2$, the result follows since $r,s>k$ and $V_{11}$ is included in a row or a column.
  Assume now $p,q>2$. Note that if $S^\prime\cap (\{v_{r+1},\ldots,v_{n}\}\times\{w_1,\ldots,w_{q-1}\})\neq\emptyset$, then $S^\prime\cap (\{v_{r+1},\ldots,v_{n}\}\times\{w_j\})$ has the same cardinality, $x$, for every $1\le j\le q-1$, so we can swap those elements in $S^{\prime}$ with the elements in $x$ rows in $V_{11}$ remaining the total $k$-domination in $V_1$. Define the new total $k$-dominating set (at least on $V_1$) by $S^{\prime\prime}$.
  Notice that the case when $S^\prime\cap (\{v_{r+1},\ldots,v_{n}\}\times\{w_1,\ldots,w_{q-1}\})\neq\emptyset$, is analogous. Therefore, $S^{\prime\prime}\cap V_1$ is a total $k$-dominating set in $V_1$.
\end{proof}

Lemma \ref{l:rows_files} has the following result as a direct consequence.

\begin{theorem}\label{th:monotomy}
  For every $k\ge2$, if $n_1\leq n_2$ and $m_1\le m_2$, then we have
  \begin{equation}\label{eq:monotony}
    \gamma_{kt}(K_{n_1}\Box K_{m_1})\leq \gamma_{kt}(K_{n_2}\Box K_{m_2})
  \end{equation}
\end{theorem}

\begin{proof}
  Using Lemma \ref{l:rows_files} we have that $\gamma_{kt}(K_{n_1}\Box K_{m_2})\leq \gamma_{kt}(K_{n_2}\Box K_{m_2})$, by letting $A=\{v_1,v_2,\ldots,v_{n_1}\}$ and $B=\{w_1,w_2,\ldots,w_{m_2}\}$ in $K_{n_2}\Box K_{m_2}$.
  Similarly, by Lemma \ref{l:rows_files} we have $\gamma_{kt}(K_{n_1}\Box K_{m_1})\leq \gamma_{kt}(K_{n_1}\Box K_{m_2})$.
  Therefore,
  \[
    \gamma_{kt}(K_{n_1}\Box K_{m_1})\leq \gamma_{kt}(K_{n_2}\Box K_{m_2})
  \]
\end{proof}

The follow result is a consequence of Lemmas \ref{l:UpperB} and \ref{l:rows_files}.

\begin{proposition}\label{prop:1xfloor}
  For every $2\le k\le n\le m$,
  \[
    \gamma_{kt}(K_{n+1}\Box K_{m+\lfloor\frac{k}2\rfloor+1})\ge \gamma_{kt}(K_n\Box K_m) + \left\lfloor\frac{k}2\right\rfloor+1.
  \]
\end{proposition}

\begin{proof}
  Let $S$ be a minimum total $k$-dominating set of $K_{n+1}\Box K_{m+\lfloor\frac{k}2\rfloor+1}$, so $|S|=\gamma_{kt}(K_{n+1}\Box K_{m+\lfloor\frac{k}2\rfloor+1})$. By Lemma \ref{l:UpperB}, there is a rows in a configuration given by $S$ with at least $\left\lfloor \frac{k}2\right\rfloor+1$ rooks. Without loss of generality we can assume that $(v_{n+1},w_{m+1}),(v_{n+1},w_{m+2}),\ldots,(v_{n+1},w_{m+\lfloor\frac{k}2\rfloor+1}) \in S$. Then, by applying Lemma \ref{l:rows_files} to the first $n$ rows and first $m$ columns, we obtain the result.
\end{proof}

Using a similar reasoning as above in Lemma \ref{l:rows_files} we obtain the following result.

\begin{theorem}\label{th:k_to_k+1}
  For every $n,m\ge k\ge2$
  \begin{equation}\label{eq:k_to_k+1}
    \gamma_{(k+1)t}(K_n\Box K_m)\le\gamma_{kt}(K_n\Box K_m)+n.
  \end{equation}
\end{theorem}

\begin{proof}
  Let $S$ be a minimum total $k$-dominating set of $K_n\Box K_m$.
  Define $f_i:=\left|S\cap \left[\{v_i\}\times\{w_1,\ldots,w_m\}\right]\right|$ for $1\le i\le n$ and $c_j:=\left|S\cap \left[\{v_1,\ldots,v_n\}\times\{w_j\}\right]\right|$ for $1\le j\le m$.
  Note that $f_i+c_j\ge k$ for every $1\le i\le n$ and $1\le j\le m$; moreover, if $(v_i,w_j)\in S$, $f_i+c_j\ge k+2$; and if $f_i+c_j= k+1$, then $(v_i,w_j)\notin S$.
  Without loss of generality, we can assume that there exist $p,q\ge1$ and $r,s,t,u\ge0$ such that $f_1=\ldots= f_p$, $f_p+1=f_{p+1}=\ldots= f_{p+r}$, $f_{p+r}+1=f_{p+r+1}\le\ldots\le f_{p+r+t}$, $f_{p+r+t}<f_{p+r+t+1}\le\ldots\le f_{n}$, and $c_1=\ldots= c_q$, $c_q+1=c_{q+1}=\ldots= c_{q+s}$, $c_{q+s}+1=c_{q+s+1}\le\ldots\le c_{q+s+u}$, $c_{q+s+u}<c_{q+s+u+1}\le\ldots\le c_{m}$. We define the following regions.

  \begin{figure}[H]
    \centering
    \begin{tikzpicture}[x=.75in, y=.75in]
      \draw[thick] (0,0)rectangle(3,3);
      \draw[line width=1.5pt,dashed]
      (0,2.25)--(3,2.25) (.75,3)--(.75,0)
      (0,2.25)--(3,2.25) (.75,3)--(.75,0)
      (0,1.5)--(3,1.5) (1.5,3)--(1.5,0)
      (0,.75)--(3,.75) (2.25,3)--(2.25,0)
      ;
      \foreach \x in {
          1,2,3,4
        }{
          \foreach \y in {
              1,2,3,4
            }{
              \fill[white,shift={(-.375+.75*\x,3.375-.75*\y)}] (-.25,-.25)rectangle node{\textcolor{black}{$V_{\y \x}$}}(.2,.2);
            }
        }
      \foreach[count=\ki] \k in {q,s,u}{
          \draw[line width=.75pt,decorate,decoration={brace,amplitude=5pt}] (-.725+.75*\ki,3.03) -- node[above=5pt] {$\k$} (-.025+.75*\ki,3.03);
        }
      \foreach[count=\ki] \k in {p,r,t}{
          \draw[line width=.75pt,decorate,decoration={brace,amplitude=5pt}] (-.03,3.025-.75*\ki) -- node[left=5pt] {$\k$} (-.03,3.725-.75*\ki);
        }
    \end{tikzpicture}
    \caption{The division of the graph into 16 sections for visualizing the argument.}
    \label{fig:k_to_k+1}
  \end{figure}
  Some of the sets $V_{xy}$ with $1\le x,y\le 4$ can be an empty-set. Figure \ref{fig:k_to_k+1} shows an auxiliary view of such an arrangement.
  Besides, $|S\cap V_{11}|=|S\cap V_{12}|=|S\cap V_{21}|=0$ and all vertices in $V_{14}$, $V_{23}$, $V_{24}$, $V_{32}$, $V_{33}$, $V_{34}$, $V_{41}$, $V_{42}$, $V_{43}$, $V_{44}$ are total dominated by at least $k+1$ vertices in $S$.
  Thus, $S^\prime:=S\cup(\{v_1\}\times\{w_1,w_2,\ldots,w_{q+s+u}\})$ dominates with at least $k+1$ vertices to all vertices in $K_n\Box K_m$ except to $S\cap(\{v_1\}\times\{w_{q+s+1},\ldots,w_{q+s+u}\})$ which $S$ dominates with exactly $k$ vertices. Therefore, we can obtain $S^{\prime\prime}$ by adding to $S^\prime$ as many elements as in $S\cap(\{v_1\}\times\{w_{q+s+1},\ldots,w_{q+s+u}\})$ within its corresponding column. It is easy to check that $S^{\prime\prime}$ is a total $(k+1)$-dominating set of $K_n\Box K_m$.

  \begin{figure}[H]
    \centering
    \begin{tikzpicture}[x=1in, y=1in]
      \draw[thin,step=.25] (0,0)grid(1,1);
      \draw[thick] (0,0)rectangle(1,1);
      \draw[line width=2pt,dashed] (0,.25)--(1,.25) (.75,0)--(.75,1) (2,.25)--(3,.25) (2.75,0)--(2.75,1);
      \draw[thin,step=.25] (2,0)grid(3,1);
      \draw[thick] (2,0)rectangle(3,1);

      \foreach \x in {0,...,2}{
          \fill (.125+.25*\x,.125) circle(.075in);
          \fill (2.125+.25*\x,.125) circle(.075in);
          \fill (2.875,.375+.25*\x) circle(.075in);
          \fill (.875,.375+.25*\x) circle(.075in);
          \fill[color=gray] (2.125+.25*\x,.875) circle(.075in);
          \fill[color=gray] (2.875,.125) circle(.075in);
        }
    \end{tikzpicture}
    \caption{Minimum total $k$-dominating configurations for $K_4\Box K_4$ from $k=2$ to $k=3$.} \label{fig:3-to-4}
  \end{figure}

  The inequality in \eqref{eq:k_to_k+1} is sharp, for instance, $\gamma_{2t}(K_4 \Box K_4)=6$ and $\gamma_{3t}(K_4 \Box K_4)=10$. Figure \ref{fig:3-to-4} shows a sharp case of the inequality \eqref{eq:k_to_k+1}
\end{proof}

\begin{theorem}\label{th:rows-cols}
  For every $2\le k< n\le m$, we have
  \[
    \gamma_{kt}(K_n\Box K_m)\leq \left\lceil\frac{k}{2}\right\rceil\left(n+m-2\left\lceil\frac{k}{2}\right\rceil+2\right)-1.
  \]
\end{theorem}

\begin{proof}
  Start with the grid representation of $K_n\Box K_m$ and rooks populating the first $\left\lceil\frac{k}{2}\right\rceil$ rows and $\left\lceil\frac{k}{2}\right\rceil$ columns, except squares $(i,j)$ with $1\le i,j<\left\lceil\frac{k}{2}\right\rceil$ or $i,j>\left\lceil\frac{k}{2}\right\rceil$; like appear in Figure \ref{fig:rows-cols}.
  Therefore, we can easily verify that every square is $k$ dominated.

  \begin{figure}[H]
    \centering
    \begin{tikzpicture}[x=1in, y=1in]
      \draw[thin,step=.25] (0,0)grid(3,3);
      \draw[thick] (0,0)rectangle(3,3);
      \draw[line width=2pt,dashed] (0,2)--(1,2)--(1,3);
      \foreach \x/\y in {
          $c_1$/0,
          $\cdots$/.25,
          $c_{\left\lceil\frac{k}{2}\right\rceil}$/.75,
          $c_m$/2.75
        }{
          \node[anchor=south] at (\y +.125,3) {\x};
        }
      \foreach \x/\y in {
          $f_1$/0,
          $\vdots$/.25,
          $f_{\left\lceil\frac{k}{2}\right\rceil}$/.75,
          $f_n$/2.75
        }{
          \node[anchor=east] at (0,2.875-\y) {\x};
        }
      \foreach \x in {0,...,8}{
          \fill (.875+.25*\x,2.875) circle(.075in);
          \draw (.875+.25*\x,2.65) node {$\vdots$};
          \draw (.875+.25*\x,2.4) node {$\vdots$};
          \fill (.875+.25*\x,2.125) circle(.075in);
          \fill (.125,.125+.25*\x) circle(.075in);
          \fill (.375,.125+.25*\x) node {$\cdots$};
          \fill (.625,.125+.25*\x) node {$\cdots$};
          \fill (.875,.125+.25*\x) circle(.075in);
        }
    \end{tikzpicture}
    \caption{Total $k$-dominating configuration for $K_n\Box K_m$.} \label{fig:rows-cols}
  \end{figure}
\end{proof}

What we notice from the figure above is that, in most cases, the configuration can be improved. This prompts the following corollary by leaving free of rooks a square of size
$\left\lceil\frac{k}{2}\right\rceil\times\left\lceil\frac{k}{2}\right\rceil$ in the upper-left corner and no rooks on a rectangular block of size $\left(n-\left\lceil\frac{k}{2}\right\rceil\right)\times\left(m-\left\lceil\frac{k}{2}\right\rceil\right)$ in the lower-right corner when $n,m>k+1$ with rooks populating all other squares. This configuration of rooks gives the following results.

\begin{corollary}\label{cor:rows-cols}
  For every $k\ge2$ and $n, m\ge k+2$, we have
  \[
    \gamma_{kt}(K_n\Box K_m)\leq \left\lceil\frac{k}{2}\right\rceil\left(n+m-2\left\lceil\frac{k}{2}\right\rceil\right).
  \]
\end{corollary}

Notice that above configuration given by Corollary \ref{cor:rows-cols} is a total $k$-dominating sets and they are minimal by inclusion for $n=k+2$. Besides, we can prove that they are solution of Problem \ref{prob:board} for every $k\ge2$.

\begin{proposition}\label{prop:n=k+2}
  For every $k\ge2$
  \begin{equation}\label{eq:n=k+2}
    \gamma_{kt}(K_{k+2}\Box K_{k+2})=2\,\left\lceil\frac{k}2\right\rceil\,\left(k+2-\left\lceil\frac{k}2\right\rceil\right).
  \end{equation}
\end{proposition}

\begin{proof}
  The configuration described above, given by Corollary \ref{cor:rows-cols} for $n=m= k+2$, gives that
  \[
    \gamma_{kt}(K_n\Box K_m)\leq 2\,\left\lceil\frac{k}{2}\right\rceil\left(k+2-\left\lceil\frac{k}{2}\right\rceil\right).
  \]
  In order to prove the other, reversed, inequality, consider $S$ a minimum total $k$ dominating set of $K_{k+2}\Box K_{k+2}$ and consider $\{f_i\}_{i=1}^{k+2}$ and $\{c_j\}_{j=1}^{k+2}$ like in proof of Lemma \ref{l:rows_files}, \emph{i.e.}, $f_i:=\left|S\cap \left[\{v_i\}\times\{w_1,\ldots,w_{k+2}\}\right]\right|$ for $1\le i\le k+2$ and $c_j:=\left|S\cap \left[\{v_1,\ldots,v_{k+2}\}\times\{w_j\}\right]\right|$ for $1\le j\le k+2$.
  Without loss of generality we can assume that $f_1\le f_2\le\ldots\le f_{k+2}$ and $c_1\le c_2\le\ldots\le c_{k+2}$.
  Recall $f_i+c_j\ge k$ for every $1\le i,j\le k+2$; moreover, if $(v_i,w_j)\in S$ we have $f_i+c_j\ge k+2$.
  We separate the proof into two cases, when $k$ is even and odd, respectively.

  \bigskip

  Let's consider $k=2r$ for some $r\ge1$, \emph{i.e.}, $k$ is even.
  Seeking for a contradiction, we assume that
  \[
    |S|< 2\,\left\lceil\frac{k}{2}\right\rceil\left(k+2-\left\lceil\frac{k}{2}\right\rceil\right)-1 = r\,(k+2) + 2r-1.
  \]
  Note that if $f_1<r$ ($c_1<r$, resp.) then $c_j\ge r+1$ ($f_j\ge r+1$, resp.) and $|S|=\displaystyle\sum_{j=1}^{k+2} c_j\ge (r+1)\,(k+2)$ \big($|S|=\displaystyle\sum_{i=1}^{k+2} f_i\ge (r+1)\,(k+2)$, resp.\big) which is a contradiction with $|S|< r\,(k+2) + (2r-1)$. So, $f_1\ge r$ and $c_1\ge r$. Besides, using the argument above of summation of number of rooks by rows or columns, we have $f_1,c_1<r+1$.
  Thus, we can assume $f_1=c_1=r$.
  Let $p\ge1$ such that $f_p=r$ and $f_{p+1}>r$, and let $q\ge p$ such that $f_q\le r+1$ and $f_{q+1}>r+1$.
  Since $c_1=r$ we have that the number of rows with at least $r+2$ rooks must be at least $r$, \emph{i.e.}, $k+2-q\ge r$ or $q\le r+2$. Then, we have
  \[
    |S|=\displaystyle\sum_{i=1}^{k+2} f_i = \sum_{i=1}^{q} f_i + \sum_{i=q+1}^{k+2} f_i \ge q\,r + (k+2-q)\,(r+2) \ge r\,(k+2) + 2r>|S|.
  \]
  That is the contradiction we were looking for and then, we have the result when $k$ is even.

  \bigskip

  Let's consider $k=2r+1$ for some $r\ge1$, \emph{i.e.}, $k$ is odd.
  Seeking for a contradiction we assume that
  \[
    |S|< 2\,\left\lceil\frac{k}{2}\right\rceil\left(k+2-\left\lceil\frac{k}{2}\right\rceil\right)-1 = (r+1)\,(k+2) + r.
  \]
  Note that if $f_1<r$ ($c_1<r$, resp.) then $c_j\ge r+2$ ($f_j\ge r+2$, resp.) and $|S|=\displaystyle\sum_{j=1}^{k+2} c_j\ge (r+2)\,(k+2)$ \big($|S|=\displaystyle\sum_{i=1}^{k+2} f_i\ge (r+2)\,(k+2)$, resp.\big) which is a contradiction with $|S|< (r+1)\,(k+2) + r$. So, $f_1\ge r$ and $c_1\ge r$, but not both equal to $r$.
  Besides, using the argument above of summation of number of rooks by rows or columns, we have $f_1,c_1\le r+1$.
  Thus, without loss of generality we can assume $f_1=r$ and $c_1=r+1$.
  Let $p\ge1$ such that $c_p=r+1$ and $c_{p+1}> r+1$, and let $q\ge p$ such that $c_q\le r+2$ and $c_{q+1}>r+2$.
  Since $f_1=r$ we have that the number of rows with at least $r+3$ rooks must be at least $r$, \emph{i.e.}, $k+2-q\ge r$ or $q\le r+3$.
  Then, we have
  \[
    |S|=\displaystyle\sum_{i=1}^{k+2} c_i = \sum_{i=1}^{q} c_i + \sum_{i=q+1}^{k+2} c_i \ge q\,(r+1) + (k+2-q)\,(r+3) \ge (r+1)\,(k+2) + 2r>|S|.
  \]
  That is the contradiction we were looking for, and consequently, we have the result when $k$ is odd.
\end{proof}

In Section \ref{sect:k=2}, we obtain a closed formula for $\gamma_{2t}(K_n\Box K_m)$ when $k=2$, where the basic minimum configuration given by Corollary \ref{cor:rows-cols} for $n=m=4$ plays an important roll, together with the configuration given by Figure \ref{fig:rows-cols} for $n=m=3$. Section \ref{sect:k=3} deals with $k=3$; however, the configuration that plays the most important roll doesn't follow from the configuration given by Corollary \ref{cor:rows-cols}.

\begin{theorem}\label{th:rows-colsnxn}
  For every $k\ge2$, 
  \[
    \displaystyle\liminf_{n\to\infty}\frac{\gamma_{kt}(K_n\Box K_n)}{n}\leq 2\,\left(\frac{1}{\left\lceil\frac{k}{2}\right\rceil}+\frac{1}{\left\lfloor\frac{k+4}{2}\right\rfloor}\right)^{-1}.
  \]
\end{theorem}

We remark that the right side of the inequality in Theorem \ref{th:rows-colsnxn} is the harmonic mean of $\left\lceil\frac{k}2\right\rceil$ and $\left\lfloor\frac{k+4}2\right\rfloor$.

\begin{proof}
  Start with the grid representation of $K_{p(k+2)}\Box K_{p(k+2)}$ for some $p\ge1$. By Proposition \ref{prop:n=k+2} we have
  \[
    \gamma_{kt}(K_{k+2}\Box K_{k+2})=2\,\left\lceil\frac{k}2\right\rceil\,\left(k+2-\left\lceil\frac{k}2\right\rceil\right).
  \]
  Particularly, we may use a configuration like in Corollary \ref{cor:rows-cols} for $n=m=k+2$.
  Then, accommodate $p$ of those configurations with size $(k+2)\times(k+2)$ into the main diagonal of the grid.
  This configuration by blocks is a total $k$-dominating set of $K_{p(k+2)}\Box K_{p(k+2)}$, and consequently, we obtain that for every $p\ge1$
  \[
    \gamma_{kt}(K_{p(k+2)}\Box K_{p(k+2)})\leq 2p\,\left\lceil\frac{k}{2}\right\rceil\left(k+2-\left\lceil\frac{k}{2}\right\rceil\right)=2p\,\left\lceil\frac{k}2\right\rceil\,\left\lfloor\frac{k+4}2\right\rfloor.
  \]
  Therefore, we have
  \begin{equation}\label{eq:1}
    \displaystyle\frac{\gamma_{kt}(K_{p(k+2)}\Box K_{p(k+2)})}{p(k+2)} = \frac{2p\,\left\lceil\frac{k}{2}\right\rceil\left(k+2-\left\lceil\frac{k}{2}\right\rceil\right)}{p(k+2)}=\frac{2\,\left\lceil\frac{k}2\right\rceil\left\lfloor\frac{k+4}2\right\rfloor}{k+2}
  \end{equation}
  The result follows from \eqref{eq:1} by taking limit into the subsequence $n\in \{p(k+2)\}_{p=1}^{\infty}$ and the identity $x+2=\left\lceil\frac{x}2\right\rceil+\left\lfloor\frac{x+4}2\right\rfloor$ for every $x\in\mathbb{N}$.
\end{proof}

\begin{figure}[H]
  \centering
  \begin{tikzpicture}[x=.5in, y=.5in]
    \draw[thick] (0,0)rectangle(5,4.5);
    \draw[line width=1pt,dashed]
    (0,1.25)--(1.25,1.25)--(1.25,0)
    (1.75,1.75)--(3,1.75)--(3,3)--(1.75,3)--(1.75,1.75)
    (3,0)--(3,4.5) (0,3)--(5,3);
    \fill (.5,0) rectangle (1.25,0.5)
    (2.25,1.75) rectangle (2.25+.75,1.75+.5)
    (3,3) rectangle (3+.75,3+.5)
    (4.25,4) rectangle (4.25+.75,4+.5)
    (0,.5) rectangle (.5,.5+.75)
    (1.75,2.25) rectangle (1.75+.5,2.25+.75);
    \foreach \x in {3,4,5}{\fill (1.25+\x/16,1.25+\x/16) circle (1pt);}
    \foreach \x in {3,4,5}{{\fill (3.75+\x/16,3.5+\x/16) circle (1pt);}}
  \end{tikzpicture}
  \caption{Total $k$-dominating configuration for a $\left[p(k+2)+r\lceil\frac{k}2\rceil\right]\times \left[p(k+2)+r\lfloor\frac{k+4}2\rfloor\right]$ board.} \label{fig:2p+r}
\end{figure}

Using a configuration of rooks populating into a $\left[p(k+2)+r\lceil\frac{k}2\rceil\right]\times \left[p(k+2)+r\lfloor\frac{k+4}2\rfloor\right]$ board for every $p,r\ge0$ but no both zeros, like in Figure \ref{fig:2p+r}, \emph{i.e.}, $2p+r$ blocks of size $\left\lceil\frac{k}2\right\rceil\,\left\lfloor\frac{k+4}2\right\rfloor$ full of rooks placed $p+r$ horizontally and $p$ vertically, we can obtain the following upper bound.

\begin{proposition}\label{prop:nxm_pr}
  For every $k\ge2$
  \begin{equation}\label{eq:nm_pr}
    \gamma_{kt}(K_{p(k+2)+r\lceil\frac{k}2\rceil}\Box K_{p(k+2)+r\lfloor\frac{k+4}2\rfloor})\leq (2p+r)\,\left\lceil\frac{k}2\right\rceil\,\left\lfloor\frac{k+4}2\right\rfloor.
  \end{equation}
\end{proposition}

The following results correspond to a general minimum rooks configuration for a $p(k+2)\times p(k+2)$ board, $p\ge1$, and it allows to obtain sharp asymptotic behavior for $\gamma_{kt}(K_n\Box K_n)$, too. See Proposition \ref{prop:nxm_p} and Theorem \ref{th:behavior_nm_even}.

\begin{proposition}\label{prop:nxm_p}
  For every even number $k\ge2$ and every natural number $p\ge1$
  \begin{equation}\label{eq:n=p(k+2)}
    \gamma_{kt}(K_{p(k+2)}\Box K_{p(k+2)})\, = p\,k\,\left(\frac{k}2+2\right).
  \end{equation}
\end{proposition}

\begin{proof}
  Proposition \ref{prop:nxm_pr} when $r=0$ gives that
  \[
    \gamma_{kt}(K_{p(k+2)}\Box K_{p(k+2)})\leq 2p\,\left\lceil\frac{k}2\right\rceil\,\left\lfloor\frac{k+4}2\right\rfloor.
  \]

  \bigskip

  Let's now consider $S$ a minimum total $k$ dominating set of $K_{p(k+2)}\Box K_{p(k+2)}$ and consider $\{f_i\}_{i=1}^{p(k+2)}$ and $\{c_j\}_{j=1}^{p(k+2)}$ like in proof of Lemma \ref{l:rows_files}, \emph{i.e.}, $f_i:=\left|S\cap \left[\{v_i\}\times\{w_1,\ldots,w_{p(k+2)}\}\right]\right|$ for $1\le i\le p(k+2)$ and $c_j:=\left|S\cap \left[\{v_1,\ldots,v_{p(k+2)}\}\times\{w_j\}\right]\right|$ for $1\le j\le p(k+2)$.
  Without loss of generality we can assume that $f_1\le f_2\le\ldots\le f_{p(k+2)}$ and $c_1\le c_2\le\ldots\le c_{p(k+2)}$.
  Recall $f_i+c_j\ge k$ for every $1\le i,j\le p(k+2)$; moreover, if $(v_i,w_j)\in S$ we have $f_i+c_j\ge k+2$.
  Let's consider $k=2r$ for some $r\ge1$, \emph{i.e.}, $k$ is even.

  \medskip

  Note that if $f_1>r$ or $c_1>r$, then $|S|$ is not minimum since in that case we have
  \[
    |S|=\displaystyle\sum_{i=1}^{p(k+2)} f_i=\sum_{j=1}^{p(k+2)} c_j\ge (r+1)\,p(k+2)=2p\cdot(r+1)^2 > 2p\cdot r(r+2)=2p\,\left\lceil\frac{k}2\right\rceil\,\left\lfloor\frac{k+4}2\right\rfloor.
  \]
  If $f_1<r$ ($c_1<r$, resp.) then $c_j\ge r+1$ ($f_j\ge r+1$, resp.) and $|S|$ is not minimum. Thus, we can assume that $f_1=c_1=r$.
  Let $a\ge1$ such that $f_a=r$ and $f_{a+1}>r$, and let $b\ge a$ such that $f_b\le r+1$ and $f_{b+1}>r+1$.
  Analogously, let $d\ge1$ such that $c_d=r$ and $c_{d+1}>r$, and let $e\ge d$ such that $c_e\le r+1$ and $c_{e+1}>r+1$.
  Let $V_{1,1}=\{(v_i,w_j\,|\,1\le i\le a,1\le j\le d)\}$, $V_{1,2}=\{(v_i,w_j\,|\,1\le i\le a,d< j\le e)\}$, $V_{1,3}=\{(v_i,w_j\,|\,1\le i\le a,e< j\le p(k+2))\}$, $V_{2,1}=\{(v_i,w_j\,|\,a< i\le b,1\le j\le d)\}$, $V_{2,2}=\{(v_i,w_j\,|\,a< i\le b,d< j\le e)\}$, $V_{2,3}=\{(v_i,w_j\,|\,a< i\le b,e< j\le p(k+2))\}$, $V_{3,1}=\{(v_i,w_j\,|\,b< i\le p(k+2),1\le j\le d)\}$, $V_{3,2}=\{(v_i,w_j\,|\,1\le i\le a,d< j\le e)\}$, and $V_{3,3}=\{(v_i,w_j\,|\,1\le i\le a,e< j\le p(k+2))\}$.
  Note that in some case could occur that $a=b$ or $d=e$. See Figure \ref{fig:p(k+2)} for an auxiliary view.

  \begin{figure}[H]
    \centering
    \begin{tikzpicture}[x=.75in, y=.75in]
      \draw[thick] (0,0)rectangle(3,3);
      \draw[line width=1.5pt,dashed] (0,2)--(3,2) (1,3)--(1,0);
      \draw[thick] (0,1.25)--(3,1.25) (1.75,3)--(1.75,0);
      \draw[line width=.75pt,decorate,decoration={brace,amplitude=5pt}] (.025,3.03) -- node[above=5pt] {$d$} (.975,3.03);
      \draw[line width=.75pt,decorate,decoration={brace,amplitude=5pt}] (1.025,3.03) -- node[above=5pt] {$e-d$} (1.725,3.03);
      \draw[line width=.75pt,decorate,decoration={brace,amplitude=5pt}] (-.03,2.025) -- node[left=5pt] {$a$} (-.03,2.975);
      \draw[line width=.75pt,decorate,decoration={brace,amplitude=5pt}] (-.03,1.275) -- node[left=5pt] {$b-a$} (-.03,1.975);
    \end{tikzpicture}
    \caption{Auxiliary subdivision for a total ${k}$-dominating set in $K_n\Box K_n$ when $n=p(k+2)$.} \label{fig:p(k+2)}
  \end{figure}

  Note that $S\cap V_{1,1}=S\cap V_{1,2}=S\cap V_{2,1}=\emptyset$ since $f_i+c_j\le k+1$ for corresponding $i,j$. Now we claim that $S\cap V_{2,2}=\emptyset$. Assume by contradiction that there is $(v,w)\in S\cap V_{2,2}$. Hence, $S\setminus\{(v,w)\}$ is also a total $k$-dominating set that contradicts $S$ is minimum. Thus, $S\cap V_{2,2}=\emptyset$. Using the same argument of minimal by inclusion of $S$, we have that $S\cap V_{2,3}=\emptyset$; so, by symmetry $S\cap V_{3,2}=\emptyset$, too. Therefore, by Lemma \ref{l:rooks} we have $a=b$ and $d=e$; moreover, we have that $V_{1,2}$, $V_{2,1}$, $V_{2,2}$, $V_{2,3}$ and $V_{3,2}$ actually vanished.  Furthermore, since $S$ is minimal by inclusion, we have $S\cap V_{3,3}=\emptyset$ as well.

  \medskip

  Then, we have
  \[
    |S|=\displaystyle\sum_{i=1}^{a} f_i + \sum_{j=1}^{d} c_j=(a+d) \,r=\sum_{i=a+1}^{p(k+2)} f_i + \sum_{j=d+1}^{p(k+2)} c_j \ge \big[2p(k+2)-(a+d)\big]\,(r+2).
  \]
  Then, we have $a+d\ge 2p(r+2)$, and consequently, $|S|\ge 2p\,r(r+2)$.
\end{proof}

Proposition \ref{prop:nxm_p} gives the following direct result which improves Corollary \ref{cor:rows-cols}.

\begin{theorem}\label{th:k_even}
  For every even number $k\ge2$ and every $m\ge n\ge k+2$
  \begin{equation}\label{eq:k_even}
    \displaystyle \gamma_{kt}(K_{n}\Box K_{m})\leq \frac{k}2\left(m+n-k\left\lfloor\frac{n}{k+2}\right\rfloor\right).
  \end{equation}
\end{theorem}

\begin{proof}
  Let $n=p(k+2)+r$ where $p\ge1$ and $0\le r<k+2$ are the quotient and remainder of the division $n$ divide by $k+2$.
  Then, starting with a minimum total $k$-dominating configuration for $p(k+2)\times p(k+2)$ like in Figure \ref{fig:2p+r} when $r=0$, \emph{i.e.}, configuration used in Proposition \ref{prop:nxm_p}, we may add $r$ rows ($r+m-n$ columns, resp.) enlarging one of the $\left\lceil\frac{k}2\right\rceil\times\left\lfloor\frac{k+4}2\right\rfloor$ \big($\left\lfloor\frac{k+4}2\right\rfloor\times\left\lceil\frac{k}2\right\rceil$, resp.\big) rectangles populating rooks until a $\left\lceil\frac{k}2\right\rceil\times\left(\left\lfloor\frac{k+4}2\right\rfloor+r\right)$ \Big($\left(\lfloor\frac{k+4}2+r+m-n\right)\rfloor\times\left\lceil\frac{k}2\right\rceil$, resp.\Big) rectangles we obtain a total $k$-dominating configuration for $n\times m$. Therefore, we have
  \[
    \begin{aligned}
      \displaystyle \gamma_{kt}(K_{n}\Box K_{m})
        & \leq 2p\,\frac{k}2\,\frac{(k+4)}2 + (2r+m-n)\,\frac{k}2 \\
        & \leq \frac{k}2\Big(p(k+4) + (2r+m-n)\Big)               \\
        & \leq \frac{k}2\left(m + 2p+r\right)                     \\
    \end{aligned}
  \]
\end{proof}

The following is a direct consequence of Theorem \ref{th:k_even}.
\begin{corollary}\label{cor:kn_even}
  For every even number $k\ge2$ and $m\ge n\ge k+2$, we have
  \[
    \gamma_{kt}(K_n\Box K_m)< kn, \quad \text{if  } m<n+k\left\lfloor\frac{n}{k+2}\right\rfloor.
  \]
\end{corollary}

The following result gives an asymptotic behavior of $\gamma_{kt}(K_n\Box K_n)$ for $k$ even when $n$ goes to infinity.

\begin{theorem}\label{th:behavior_nm_even}
  For every even number $k\ge2$ and a natural number $n\ge k$
  \begin{equation}\label{eq:behavior_nm_even}
    \displaystyle\lim_{n\to\infty}\, \frac{\gamma_{kt}(K_n\Box K_n)}{n} = \left(\frac{1}{k} + \frac{1}{k+4} \right)^{-1} 
  \end{equation}
\end{theorem}

\begin{proof}
  The result follows directly from the facts that $n=q_n(k+2) +r_n$ where $q_n$ and $0\le r_n<k+2$ are the quotient and the remainder when $n$ is divided by $k+2$. Thus, by Proposition \ref{prop:nxm_p} and Theorem \ref{th:k_even} we have
  \[
    \displaystyle \frac{\gamma_{kt}(K_n\Box K_n)}{n}=\frac{k\,q_n\,\frac{k+4}{2}+s_{n}}{q_n (k+2)+r_n}\quad \text{for some } 0\le s_{n}\le \frac{k(k+4)}{2}.
  \]
  Therefore, we have
  \[
    \displaystyle \frac{\gamma_{kt}(K_n\Box K_n)}{n} = \frac{q_n\,\frac{k(k+4)}{2}+s_{n}}{q_n (k+2)+r_n} = \frac{\frac{k(k+4)}{2}+\frac{s_{n}}{q_n}}{(k+2)+\frac{r_n}{q_n}}
    \xrightarrow{\quad n\to\infty\quad}
    \frac{k(k+4)}{2(k+2)}.
  \]
\end{proof}

\begin{corollary}
  Let $G,H$ be two graphs with order at least $n$. Then, for every even number $k\ge2$
  \[
    \displaystyle\liminf_{n\to\infty}\frac{\gamma_{kt}(G\Box H)}{n} = 2\,\left(\left\lceil\frac{k}{2}\right\rceil^{-1}+\left\lfloor\frac{k+4}{2}\right\rfloor^{-1}\right)^{-1}.
  \]
\end{corollary}

%
%

\
\section{Total $2$-domination number of $K_n\Box K_m$}\label{sect:k=2}

In this section we deals with the total $2$-domination number of $K_n\Box K_m$ for $n,m\ge2$. The main result in this section is Theorem \ref{th:nm} that gives a closed formula of $\gamma_{2t}(K_n\Box K_m)$ for every $n,m\ge2$, see Table \ref{closed_formula}.
We have the following consequence from Lemma \ref{l:UpperB} when $k=2$.

\begin{proposition}\label{prop:2xn}
  For every $m\ge2$ we have $\gamma_{2t}(K_2\Box K_m)=4$.
\end{proposition}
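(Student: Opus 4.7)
The plan is to observe that the proposition is nothing more than a specialization of Lemma \ref{l:UpperB} to the case $n=2$, in which the upper and lower bounds of \eqref{eq:UpperB} happen to agree.

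More precisely, since $m\ge 2$ we have $\min\{2,m\}=2$. Substituting $n=2$ into \eqref{eq:UpperB} yields
\[
4 \;=\; \min\{2,m\}+2 \;\le\; \gamma_{2t}(K_2\Box K_m) \;\le\; 2\min\{2,m\} \;=\; 4,
\]
which forces $\gamma_{2t}(K_2\Box K_m)=4$. If a concrete minimum total $2$-dominating set is desired, one can reuse the construction from the proof of Lemma \ref{l:UpperB}, namely $S:=V(K_2)\times\{w_1,w_2\}$ for any two distinct vertices $w_1,w_2\in V(K_m)$; this set has cardinality $4$ and, as verified there, is total $2$-dominating in $K_2\Box K_m$.

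There is essentially no obstacle to overcome: the content of the proposition is the observation that, for $\min\{n,m\}=2$, the elementary counting lower bound and the trivial ``two parallel copies'' upper bound of Lemma \ref{l:UpperB} coincide.
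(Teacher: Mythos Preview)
Your proof is correct and matches the paper's approach exactly: the proposition is stated there simply as a direct consequence of Lemma~\ref{l:UpperB}, and you have spelled out precisely that specialization (the bounds in \eqref{eq:UpperB} collapse to $4$ when $\min\{n,m\}=2$).
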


The following result is a particular version of Lemma \ref{l:rooks} for $k=2$ with an improvement.

\begin{lemma}\label{l:non-null}
  For every $n,m\ge2$, if $\gamma_{2t}(K_n\Box K_m)<2\,\min\{n,m\}$ then the following statements hold in every rook configuration of a Problem \ref{prob:board} solution
  \begin{enumerate}[(i)]
    \item there is a rook in each row (column, resp.),
    \item there is a row (column, resp.) with at least three rooks.
  \end{enumerate}
\end{lemma}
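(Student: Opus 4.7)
The plan is to work directly on the rook configuration. Fix any minimum total $2$-dominating set $S$ of $K_n\Box K_m$, and let $r_i$ and $c_j$ denote the number of rooks in row $i$ and column $j$ respectively. Without loss of generality I assume $n\le m$, so the hypothesis becomes $|S|<2n\le 2m$. The key local constraint that I will use repeatedly is that every cell $(i,j)$ must see at least two rooks, which translates to $r_i+c_j\ge 2$ in general, and $r_i+c_j\ge 4$ whenever $(i,j)\in S$, because the rook sitting at $(i,j)$ is not counted as one of its own neighbors.

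For statement~(1) I would argue by contradiction. If some row carried no rook, then each cell of that row would force $c_j\ge 2$ for every $j$, so $|S|=\sum_j c_j\ge 2m\ge 2n$, contradicting the hypothesis. The analogous reasoning applied to a hypothetically empty column yields $|S|\ge 2n$, again impossible.

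For statement~(2) I would use a simple pigeonhole argument. Since $|S|<2n$ and each of the $n$ rows contains at least one rook by~(1), some row $i_0$ must satisfy $r_{i_0}=1$; let $(i_0,j_0)$ be its unique rook. Because $(i_0,j_0)\in S$, that cell demands two adjacent rooks, and since row $i_0$ contains no other rook all such witnesses must lie in column $j_0$. Hence $c_{j_0}\ge 3$, which is the column half of~(2). For the row half I would apply the same reasoning to columns: using $|S|<2n\le 2m$ together with~(1), some column has exactly one rook, and by the symmetric argument the unique rook in that column forces the row containing it to hold at least three rooks.

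The only subtlety, more a bookkeeping matter than a real obstacle, is recognising that the row/column symmetry is not automatic when $n\ne m$: one needs the strict inequality $|S|<2\min\{n,m\}$ to be tight against both $2n$ and $2m$, and this is precisely what the hypothesis provides.
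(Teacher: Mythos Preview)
Your proof is correct and follows essentially the same approach as the paper: an empty row forces every column to carry at least two rooks (yielding $|S|\ge 2m\ge 2\min\{n,m\}$), and a row with exactly one rook---which exists by pigeonhole---forces its column to carry at least three. The only difference is cosmetic: you make the counting explicit via $r_i$ and $c_j$ and spell out the $r_i+c_j\ge 4$ constraint at occupied cells, whereas the paper leaves these steps implicit.
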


\begin{proof}
  {\it (of Part ii)} 
  Since $\gamma_{2t}(K_n\Box K_m)<2\,\min\{n,m\}$ there is a row (column, resp.) with just one rook, the square with that rook must be dominated by others two rooks located in the same column (row, resp.). 
\end{proof}

In fact, we have the following result as a consequence of Lemma \ref{l:non-null}. This result also appears in \cite[Proposition 3.2]{BSS} when $k=2$.

\begin{proposition}\label{prop:+m_2n}
  For every $n\le m$, we have $\gamma_{2t}(K_n\Box K_m)\ge\min\{m+2,2n\}$. Furthermore, $\gamma_{2t}(K_n\Box K_{m})=2n$ for $m\ge2n-2$.
\end{proposition}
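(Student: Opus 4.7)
The plan is to perform a single case split on the size of a minimum total $2$-dominating set $S$ of $K_n\Box K_m$ and to combine the two preceding lemmas. Writing $|S|=\gamma_{2t}(K_n\Box K_m)$ and recalling that $n\le m$ gives $2\min\{n,m\}=2n$, so the question is whether $|S|$ is above or below this threshold.

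If $|S|\ge 2n$, the claimed inequality $|S|\ge\min\{m+2,2n\}$ is immediate. Otherwise $|S|<2n=2\min\{n,m\}$, which is exactly the hypothesis of Lemma \ref{l:non-null}. I would then apply that lemma on the $n\times m$ board: part (1) yields that each of the $m$ columns contains at least one rook, and part (2) in its ``column'' form (valid because $|S|<2n\le 2m$, so the same argument used in the lemma's proof works with rows and columns interchanged) yields some column containing at least three rooks. Summing the column contributions gives
\[
|S| \;\ge\; 3+(m-1) \;=\; m+2,
\]
which together with the trivial case establishes $\gamma_{2t}(K_n\Box K_m)\ge\min\{m+2,2n\}$.

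For the ``Furthermore'' part, the hypothesis $m\ge 2n-2$ rewrites as $m+2\ge 2n$, so $\min\{m+2,2n\}=2n$ and the inequality just proved gives $\gamma_{2t}(K_n\Box K_m)\ge 2n$; Lemma \ref{l:UpperB} supplies the matching upper bound, forcing equality. The proof is essentially a direct application of the two preceding lemmas, so no serious obstacle arises. The only subtlety I would be careful about is to invoke the column version rather than the row version of Lemma \ref{l:non-null}.2, so that the resulting count produces a bound involving $m$ rather than $n$; this is precisely what is needed to pair with the upper bound $2n$ to pin down the exact value in the range $m\ge 2n-2$.
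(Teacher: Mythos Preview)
Your argument is correct and matches the paper's intended route, deriving the proposition directly from Lemmas~\ref{l:UpperB} and~\ref{l:non-null} (the paper states the result as a consequence of Lemma~\ref{l:non-null} without spelling out the details). The only cosmetic remark is that the ``column with at least three rooks'' conclusion of Lemma~\ref{l:non-null} already follows from its stated hypothesis $|S|<2\min\{n,m\}=2n$, so your aside about $|S|<2n\le 2m$ is unnecessary.
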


Note that Lemma \ref{l:rows_files} for $k=2$ could be extended until $r,s\ge2$ (instead of $r,s>2$). It is easily seen that $|S\cap V|\ge 4$ when $r=2$.
In order to obtain the exact value of $\gamma_{2t}(K_n\Box K_n)$ we need
the following interesting result will be useful to obtain some of the main results of this work.

\begin{theorem}\label{th:-3o4}
  For every $6\le n\le m$, 
  \begin{equation}\label{eq:-3o4}
    \gamma_{2t}(K_n\Box K_m)\ge \min\{ \gamma_{2t}(K_3\Box K_3) + \gamma_{2t}(K_{n-3}\Box K_{m-3}) , \gamma_{2t}(K_4\Box K_4) + \gamma_{2t}(K_{n-4}\Box K_{m-4}) \}.
  \end{equation}
\end{theorem}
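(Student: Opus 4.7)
My plan is to fix a minimum total $2$-dominating set $S$ of $K_n\Box K_m$ and to exhibit, for some $r\in\{3,4\}$, an $r$-set of rows $A\subset V(K_n)$ and an $r$-set of columns $B\subset V(K_m)$ such that the restriction $S\cap V_4$ is a total $2$-dominating set of $\langle V_4\rangle\simeq K_{n-r}\Box K_{m-r}$, where $V_1=A\times B$, $V_2=A\times(V(K_m)\setminus B)$, $V_3=(V(K_n)\setminus A)\times B$ and $V_4=(V(K_n)\setminus A)\times(V(K_m)\setminus B)$. Once such a pair $A,B$ is found, Lemma \ref{l:rows_files} gives $|S\cap(V_1\cup V_2\cup V_3)|\ge \gamma_{2t}(K_r\Box K_r)$, while the admissibility of the restriction yields $|S\cap V_4|\ge \gamma_{2t}(K_{n-r}\Box K_{m-r})$. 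Adding the two contributions and taking the minimum over $r\in\{3,4\}$ delivers \eqref{eq:-3o4}.

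To make the admissibility condition concrete, note that a vertex $(v,w)\in V_4$ is $2$-dominated by $S\cap V_4$ precisely when $|S\cap(\{v\}\times B)|+|S\cap(A\times\{w\})|\le d_S\bigl((v,w)\bigr)-2$; call $(v,w)$ \emph{damaged} by the pair $(A,B)$ when this fails. Thus $S\cap V_4$ is total $2$-dominating on $V_4$ iff $(A,B)$ damages no vertex of $V_4$. I first attempt $r=3$: using Lemma \ref{l:non-null} (every row and column carries at least one rook when $|S|<2\min\{n,m\}$, and some row carries three or more) I try to locate three rows and three columns whose removal creates no damaged vertex. If this fails, I pass to $r=4$, using the additional row and column to ``absorb'' the few vertices damaged by the best $r=3$ choice. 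The arguments that modify $A,B$ (and, via the replacement ideas of Lemmas \ref{l:m+1} and \ref{l:rows_files}, modify $S$ itself when convenient) run case by case on the distribution of rook counts across rows and columns of $S$.

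The main obstacle is precisely the admissibility step: proving that at least one of $r=3$ or $r=4$ always admits such a clean extraction. The hypothesis $n,m\ge 6$ is essential, since it guarantees enough free rows and columns to choose from and leaves nonempty residues $K_{n-r}\Box K_{m-r}$. The most delicate sub-case should be when many vertices of $V_4$ have $d_S\bigl((v,w)\bigr)=2$, because then none of their dominators may lie in the removed rows and columns; here one must argue that such tight vertices form a structured configuration (forced by Lemma \ref{l:non-null} applied to many small sub-boards) that can nevertheless be avoided by enlarging from three to four removed rows and columns, at the cost of an extra rook in the boundary, which is exactly the trade-off encoded in \eqref{eq:-3o4}.
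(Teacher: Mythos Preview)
Your plan contains a genuine gap and, more importantly, misapplies Lemma~\ref{l:rows_files} to the wrong side of the decomposition, which is why you are forced into the difficult admissibility argument you never complete.

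You apply Lemma~\ref{l:rows_files} to the small cross $V_1\cup V_2\cup V_3$ (the $r$ removed rows and $r$ removed columns), and then need $S\cap V_4$ to be a total $2$-dominating set of $\langle V_4\rangle$ in order to bound $|S\cap V_4|$. Guaranteeing this admissibility is exactly the step you leave open; your sketch of how ``tight'' vertices with $d_S=2$ might be avoided by passing from $r=3$ to $r=4$ is not a proof, and in fact it is not clear that such $A,B$ always exist. The paper bypasses this issue entirely by reversing the roles: apply Lemma~\ref{l:rows_files} to the \emph{complementary} $(n-r)$-set $V(K_n)\setminus A$ and $(m-r)$-set $V(K_m)\setminus B$, which immediately gives $|S\cap(V_2\cup V_3\cup V_4)|\ge\gamma_{2t}(K_{n-r}\Box K_{m-r})$ with no admissibility hypothesis whatsoever (Lemma~\ref{l:rows_files} applies to any total $2$-dominating set of the whole graph).

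What remains is then the easy part: exhibit $A,B$ so that the small corner $V_1=A\times B$ itself already contains at least $\gamma_{2t}(K_r\Box K_r)$ rooks of $S$. By Lemma~\ref{l:non-null} there is a row $v_1$ and a column $w_1$ each carrying at least three rooks. If $(v_1,w_1)\in S$, take the $3\times 3$ block spanned by $v_1$ together with two rook-rows of column $w_1$, and $w_1$ together with two rook-columns of row $v_1$; this block contains at least $5=\gamma_{2t}(K_3\Box K_3)$ rooks. If $(v_1,w_1)\notin S$, the three row-rooks and three column-rooks are all distinct and sit in a $4\times 4$ block containing at least $6=\gamma_{2t}(K_4\Box K_4)$ rooks. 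In either case the two disjoint pieces $S\cap V_1$ and $S\setminus V_1=S\cap(V_2\cup V_3\cup V_4)$ add up to at least the claimed minimum, and no ``damage'' analysis is needed.
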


\begin{proof}
  Note that if $\gamma_{2t}(K_n\Box K_m)=2n$, then the inequality holds. Hence, we can assume that $\gamma_{2t}(K_n\Box K_m)<2n$. 
  Let $S$ be a minimum total $2$-dominating set of $K_n\Box K_m$.
  By Lemma \ref{l:non-null} there is a vertex $(v_i,w_j)\in V(K_n\Box K_m)$ such that $\left|S\cap \big(\{v_i\}\times V(K_m)\big)\right|\ge 3$ and $\left|S\cap \big( V(K_n)\times \{w_j\}\big)\right|\ge 3$.
  Without loss of generality, we can assume that $i=j=1$.
  Assume first that $(v_1,w_1)\notin S$. Without loss of generality, we can assume that $(v_1,w_2)$, $(v_1,w_3)$, $(v_1,w_4)$, $(v_2,w_1)$, $(v_3,w_1)$, $(v_4,w_1)\in S$, \emph{i.e.}, $S$ has the configuration in Figure \ref{fig:-3o4}, right.
  Denote by $A:=S\cap V_{1}$ where $V_{1}:= \{v_1,v_2,v_3,v_4\}\times\{w_1,w_2,w_3,w_4\}$.
  Clearly, $|A|\ge \gamma_{2t}(K_4\Box K_4)=6$.
  Hence, by Lemma \ref{l:rows_files} we have $|S\setminus A|\ge\gamma_{2t}(K_{n-4}\Box K_{m-4})$, and consequently, $\gamma_{2t}(K_n\Box K_m)\ge \gamma_{2t}(K_4\Box K_4) + \gamma_{2t}(K_{n-4}\Box K_{m-4})$.
  The proof when $(v_1,w_1)\in S$ is analogous.
  Note that $S$ has the configuration in Figure \ref{fig:-3o4}, left, and Lemma \ref{l:rows_files} gives $\gamma_{2t}(K_n\Box K_m)\ge \gamma_{2t}(K_3\Box K_3)+\gamma_{2t}(K_{n-3}\Box K_{m-3})$.
\end{proof}

\begin{figure}[H]
  \centering
  \begin{tikzpicture}[x=1in,y=1in]
    \def\SS{.25}
    \begin{scope}[xshift=0in]
      \draw[thin](0,0) grid[step=\SS in](8*\SS,8*\SS);
      \fill[white](8*\SS,8*\SS)rectangle(.9,.9);
      \foreach \x in {0,...,2}{
          \fill (\x*\SS+.125,.125)circle (.075in);
          \fill (.125,\x*\SS+.125)circle (.075in);
        }
      \draw[thick](0,3*\SS)--(8*\SS,3*\SS);
      \draw[thick](3*\SS,0)--(3*\SS,8*\SS);
      \draw[thick](0,0) rectangle(8*\SS,8*\SS);
    \end{scope}
    \begin{scope}[xshift=2.25in]
      \draw[thin](0,0) grid[step=\SS in](9*\SS,9*\SS);
      \fill[white](9*\SS,9*\SS)rectangle(1.15,1.15);
      \foreach \x in {1,...,3}{
          \fill (\x*\SS+.125,.125)circle (.075in);
          \fill (.125,\x*\SS+.125)circle (.075in);
        }
      \draw[thick](0,4*\SS)--(9*\SS,4*\SS);
      \draw[thick](4*\SS,0)--(4*\SS,9*\SS);
      \draw[thick](0,0) rectangle(9*\SS,9*\SS);
    \end{scope}
  \end{tikzpicture}

  \caption{Auxiliar configurations for Theorem \ref{th:-3o4}} \label{fig:-3o4}
\end{figure}

\begin{lemma}\label{l:+1+1}
  For every $n,m\ge2$, we have
  \begin{equation}\label{eq:+1+1}
    \gamma_{2t}(K_n\Box K_m)+1\le \gamma_{2t}(K_{n+1}\Box K_{m+1})\le \gamma_{2t}(K_n\Box K_m)+2.
  \end{equation}
\end{lemma}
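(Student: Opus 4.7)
The plan is to establish the two inequalities separately, each as a short consequence of an earlier lemma.

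For the lower bound, I would apply Lemma~\ref{l:rows_files} to a minimum total $2$-dominating set $S$ of $K_{n+1}\Box K_{m+1}$. Since $|S|\ge(n+1)+2>0$ by Lemma~\ref{l:UpperB}, one can pick any $(v_i,w_j)\in S$ and take $A:=V(K_{n+1})\setminus\{v_i\}$ (an $n$-set) together with $B:=V(K_{m+1})\setminus\{w_j\}$ (an $m$-set). Then $(A\times V(K_{m+1}))\cup(V(K_{n+1})\times B)$ is the whole vertex set with only the cell $(v_i,w_j)$ removed, and since this cell lies in $S$, Lemma~\ref{l:rows_files} gives
\[
|S|-1 = \left|S\cap\left[(A\times V(K_{m+1}))\cup(V(K_{n+1})\times B)\right]\right|\ge \gamma_{2t}(K_n\Box K_m),
\]
which rearranges to the claimed lower bound. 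The only step here is choosing $A,B$ so that the single vertex excluded from their union belongs to $S$.

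For the upper bound I would assume without loss of generality that $n\le m$ and take a minimum total $2$-dominating set $S$ of $K_n\Box K_m$. If $|S|=2n$, Lemma~\ref{l:UpperB} applied to $K_{n+1}\Box K_{m+1}$ at once yields $\gamma_{2t}(K_{n+1}\Box K_{m+1})\le 2(n+1)=|S|+2$. Otherwise $|S|<2n=2\min\{n,m\}$, and Lemma~\ref{l:non-null} furnishes both a column $w_a$ and a row $v_b$ containing at least three rooks of $S$. I would then set
\[
S':=S\cup\{(v_{n+1},w_a),(v_b,w_{m+1})\}
\]
and verify that $S'$ is total $2$-dominating in $K_{n+1}\Box K_{m+1}$: the original vertices keep their $\ge 2$ neighbors from $S$; each $(v_{n+1},w_j)$ with $j\ne a$ sees the added rook $(v_{n+1},w_a)$ in its row plus at least one rook of $S$ in column $w_j$ from Lemma~\ref{l:non-null}.1; the added vertex $(v_{n+1},w_a)$ is dominated by the three column-$w_a$ rooks of $S$; the new column is handled symmetrically; and the corner $(v_{n+1},w_{m+1})$ has precisely the two added rooks as neighbors.

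I expect no real obstacle. The lower bound is essentially a one-line corollary of Lemma~\ref{l:rows_files}, and the upper-bound construction is routine once the special row and column from Lemma~\ref{l:non-null} are in hand; the only bookkeeping subtlety is isolating the saturated case $|S|=2n$, which is dispatched directly by Lemma~\ref{l:UpperB}.
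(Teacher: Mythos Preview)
Your proposal is correct and follows essentially the same route as the paper: the lower bound is obtained by removing one vertex of a minimum set and invoking Lemma~\ref{l:rows_files}, and the upper bound by adjoining two vertices in the row and column guaranteed by Lemma~\ref{l:non-null} (after disposing of the saturated case via Lemma~\ref{l:UpperB}). Your write-up is in fact more explicit than the paper's in verifying that the augmented set $S'$ is total $2$-dominating, correctly invoking Lemma~\ref{l:non-null}.1 to cover the new row and column.
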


\begin{proof}
  Let $S'$ be a total $2$-dominating set of $K_{n+1}\Box K_{m+1}$ and consider $(v',w')\in S'$. By Lemma \ref{l:rows_files} we have $\gamma_{2t}(K_{n+1}\Box K_{m+1})-1=|S'\setminus\{(v',w')\}|\ge \gamma_{2t}(K_n\Box K_m)$, and so, the first inequality in \eqref{eq:+1+1} holds.
  Let $S$ be a total $2$-dominating set of $K_n\Box K_m$. If $|S|=\gamma_{2t}\big(K_n\Box K_m\big)=2\min\{n,m\}$ then $\gamma_{2t}\big(K_{n+1}\Box K_{m+1}\big)\le2\min\{n+1,m+1\}=|S|+2$. Assume that $|S|<2\min\{n,m\}$.
  Then by Lemma \ref{l:non-null} there are vertices $v\in V(K_n)$ and $w\in V(K_m)$ such that $\left|S\cap\big(\{v\}\times V(K_m)\big)\right|\ge3$ and $\left|S\cap\big(V(K_n)\times\{w\}\big)\right|\ge3$. Thus it is a simple matter to check that $S\cup\{(v,w_{m+1}),(v_{n+1},w)\}$ is a total $2$-dominating set of $K_{n+1}\Box K_{m+1}$.
\end{proof}

Lemma \ref{l:+1+1} and Theorem \ref{th:-3o4} have the following direct consequence.

\begin{theorem}\label{th:+4}
  For every $n, m\ge2$, if there is a minimum total $2$-dominating set $S$ of $K_{n}\Box K_{m}$ such that $S\cap\big(\{v\}\times V(K_m)\big)\neq\emptyset$ and $S\cap\big(V(K_n)\times\{w\}\big)\neq\emptyset$ for every $v\in V(K_n)$ and $w\in V(K_m)$ then
  \begin{equation}\label{eq:+4}
    \gamma_{2t}(K_{n+4}\Box K_{m+4}) = \gamma_{2t}(K_{n}\Box K_{m})+6.
  \end{equation}
\end{theorem}

Figure \ref{fig:67-68} left shows a minimal configuration for a total $2$-dominating set of $K_{6}\Box K_{7}$ which satisfies \eqref{eq:+4}; however, it does not verify the condition of Theorem \ref{th:+4} since any total $2$-dominating set $S$ of $K_{2}\Box K_{3}$ with $S\cap\big(\{v\}\times V(K_3)\big)\neq\emptyset$ and $S\cap\big(V(K_2)\times\{w\}\big)\neq\emptyset$ for every $v\in V(K_2)$ and $w\in V(K_3)$ is non-minimal.
Similarly, Figure \ref{fig:67-68} right shows a non-minimal configuration for a total $2$-dominating set of $K_{6}\Box K_{8}$ which does not verify neither the condition of Theorem \ref{th:+4} nor the equality in \eqref{eq:-3o4} since $\gamma_{2t}(K_6\Box K_8)=11\neq10=\min\{6+4,5+6\}$.

\begin{figure}[H]
  \centering
  \begin{tikzpicture}[x=1in,y=1in]
    \def\SS{.25}
    \begin{scope}[xshift=0in]
      \draw[thin](0,0) grid[step=\SS in](7*\SS,6*\SS);
      \foreach \x in {1,...,4}{\fill (.125,\x*\SS+.125)circle (.075in);}
      \foreach \x in {1,...,3}{\fill (\x*\SS+.125,.125)circle (.075in);}
      \foreach \x in {4,...,6}{\fill (\x*\SS+.125,1.375)circle (.075in);}
      \draw[very thick](0,4*\SS)--(7*\SS,4*\SS);
      \draw[very thick](4*\SS,0)--(4*\SS,6*\SS);
      \draw[very thick](0,0) rectangle(7*\SS,6*\SS);
    \end{scope}
    \begin{scope}[xshift=2in]
      \draw[thin](0,0) grid[step=\SS in](8*\SS,6*\SS);
      \foreach \x in {1,...,3}{\fill (.125,\x*\SS+.125)circle (.075in);}
      \foreach \x in {1,...,3}{\fill (\x*\SS+.125,.125)circle (.075in);}
      \foreach \x in {4,...,7}{\fill (\x*\SS+.125,1.375)circle (.075in);}
      \foreach \x in {4,...,5}{\fill (\x*\SS+.125,1.125)circle (.075in);}
      \draw[very thick](0,4*\SS)--(8*\SS,4*\SS);
      \draw[very thick](4*\SS,0)--(4*\SS,6*\SS);
      \draw[very thick](0,0) rectangle(8*\SS,6*\SS);
    \end{scope}
  \end{tikzpicture}
  \caption{Minimal configuration for $6\times7$ (left) and non-minimal for $6\times8$ (right).} \label{fig:67-68}
\end{figure}

\begin{theorem}\label{th:+13}
  For every $3\leq n\leq m$, if $\gamma_{2t}(K_{n}\Box K_{m})<2n$ then
  \begin{equation}\label{eq:+13}
    \gamma_{2t}(K_{n+1}\Box K_{m+3}) = \gamma_{2t}(K_{n}\Box K_{m})+3.
  \end{equation}
\end{theorem}

\begin{proof}
  Let $S$ be a minimum total $2$-dominating set of $K_{n+1}\Box K_{m+3}$. By Lemma \ref{l:non-null} there is $v\in V(K_{n+1})$ such that $\left|S\cap\{v\}\times V(K_{m+3})\right| \geq3$. Without loss of generality, we can assume that $A=\{(v_{n+1},w_{m+1})$, $(v_{n+1},w_{m+2})$, $(v_{n+1},w_{m+3})\}\subseteq S$. Then, by Lemma \ref{l:rows_files}, we have $\gamma_{2t}(K_{n+1}\Box K_{m+3})-3=|S\setminus A|\ge \gamma_{2t}(K_n\Box K_m)$, and so, the inequality $\gamma_{2t}(K_{n+1}\Box K_{m+3})\ge \gamma_{2t}(K_n\Box K_m)+3$ holds.
  Let $S^\prime$ be a total $2$-dominating set of $K_n\Box K_m$. Thus it is a simple matter to check that $S^\prime\cup A$ is a total $2$-dominating set of $K_{n+1}\Box K_{m+3}$ obtaining $\gamma_{2t}(K_{n+1}\Box K_{m+3})\leq \gamma_{2t}(K_n\Box K_m)+3$.
\end{proof}


Now, we approach the case $n=m$, \emph{i.e.}, to compute $\gamma_{2t}(K_n\Box K_n)$. The proof of the following Proposition is recommended to the reader.

\begin{proposition}\label{prop:n=3-6}
  We have $\gamma_{2t}(K_2\Box K_2)=4$, $\gamma_{2t}(K_3\Box K_3)=5$, $\gamma_{2t}(K_4\Box K_4)=6$, $\gamma_{2t}(K_5\Box K_5)=8$ and $\gamma_{2t}(K_6\times K_6)=10$.
\end{proposition}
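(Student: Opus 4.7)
The plan is to handle the five claimed equalities one at a time. The case $n=2$ is immediate from Proposition~\ref{prop:2xn}. For each of the remaining four values I would first exhibit an explicit rook configuration realising the upper bound, and then establish the matching lower bound either directly from Lemma~\ref{l:UpperB} (for $n\in\{3,4\}$) or via a finer case analysis on the row- and column-sum sequences (for $n\in\{5,6\}$).

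The main family of upper-bound constructions is an ``L-shape'': place $n-1$ rooks along row $1$ in columns $1,\ldots,n-1$, together with $n-1$ rooks along column $n$ in rows $2,\ldots,n$, for a total of $2n-2$ rooks. For $n\in\{4,5,6\}$ this gives $6,8,10$ respectively; one checks directly that each row-$1$ rook has the $n-2\ge 2$ other row-$1$ rooks as neighbours in $S$, each column-$n$ rook has the $n-2\ge 2$ other column-$n$ rooks, and each non-rook cell $(i,j)$ with $i\ge 2$, $j\le n-1$ is dominated by the two rooks $(1,j)$ and $(i,n)$. For $n=3$ this L-shape yields only $4<5$ rooks, so instead I would take $S=\{(1,1),(1,2),(1,3),(2,1),(3,1)\}$ (the union of row $1$ and column $1$ sharing the vertex $(1,1)$), of size $5$, and verify the $2$-domination condition directly.

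For the lower bound, Lemma~\ref{l:UpperB} already gives $\gamma_{2t}(K_n\Box K_n)\ge n+2$, matching the claim for $n\in\{3,4\}$. For $n=5$ I would argue by contradiction that no total $2$-dominating set has cardinality $7$, and for $n=6$ that none has cardinality $8$ or $9$. Since $|S|<2n$ in each of these cases, Lemma~\ref{l:non-null} applies: every row and column carries at least one rook, and some row (respectively column) carries at least three. Combined with $\sum_i r_i=|S|$, this leaves only the row-sum partition $(3,1,1,1,1)$ when $n=5$, $|S|=7$; only $(3,1,1,1,1,1)$ when $n=6$, $|S|=8$; and only $(4,1,1,1,1,1)$ or $(3,2,1,1,1,1)$ when $n=6$, $|S|=9$ (and similarly for the column partition). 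In each sub-case I plan to derive a contradiction from the fundamental inequalities $r_i+c_j\ge 4$ at every rook $(i,j)\in S$ and $r_i+c_j\ge 2$ at every non-rook cell. When both partitions have the form $(3,1,\ldots,1)$, once the unique ``full'' row and column are placed, the remaining rooks are trapped in a square sub-board formed by the non-saturated rows and columns, and each such leftover rook turns out to be the unique rook in both its row and its column, hence has $0$ neighbours in $S$. When $n=6$, $|S|=9$ with row partition $(4,1,1,1,1,1)$ or $(3,2,1,1,1,1)$, the singleton rows each demand a column of sum $\ge 3$ containing their rook, and this constraint fails arithmetically against the permissible column partitions.

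The main obstacle I anticipate is the case $n=6$ with $|S|=9$: two possible row partitions cross two possible column partitions, producing four subcases, and the bookkeeping is slightly delicate. The key observation driving the counting is that any row of sum $1$ forces its unique rook into a column of sum at least $3$, while any row of sum $2$ forces its rooks into columns of sum at least $2$; combined with the cap of $6$ on any column sum and the requirement from Lemma~\ref{l:non-null} that every column contain at least one rook, the arithmetic quickly becomes infeasible in each subcase.
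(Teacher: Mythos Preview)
The paper actually gives no proof of this proposition: it explicitly ``recommends'' the proof to the reader and supplies only Figure~\ref{fig:Conf3-6} with upper-bound configurations. So there is nothing to compare at the level of the lower-bound argument; your case analysis on row/column partitions is your own contribution, and it is sound. One small difference in the upper bounds: for $n=6$ the paper's figure uses two diagonal copies of the $K_3\Box K_3$ configuration (total $5+5=10$), whereas you use a single L-shape of size $2n-2=10$. Both are valid total $2$-dominating sets.

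Your lower-bound sketch is correct, including the delicate case $n=6$, $|S|=9$. The two observations you isolate do the work: a singleton row forces its rook into a column of sum $\ge 3$, and a row of sum $2$ forces its rooks into columns of sum $\ge 2$. Running through the four row/column partition pairs: with columns $(4,1,1,1,1,1)$ or $(3,2,1,1,1,1)$ there is exactly one column of sum $\ge 3$, so the $5$ (resp.\ $4$) singleton-row rooks must all land there, which already overflows unless we are in the pair $\big((3,2,1,1,1,1),(4,1,1,1,1,1)\big)$; in that remaining pair the column of sum $4$ is filled exactly by the four singleton-row rooks, the five rooks from the rows of sums $3$ and $2$ then occupy the five columns of sum $1$, and the rook in the row of sum $2$ has only one neighbour in $S$, a contradiction. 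It would be worth spelling this last sub-case out explicitly in the write-up, since your closing sentence (``the arithmetic quickly becomes infeasible'') slightly undersells the one extra step needed there.
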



\begin{figure}[H]
  \centering
  \begin{tikzpicture}[x=1in,y=1in]
    \def\SS{.25}
    \begin{scope}[xshift=0in]
      \draw[thin](0,0) grid[step=\SS in](2*\SS,2*\SS);
      \foreach \x in {0,...,1}{
          \fill (\x*\SS+.125,.125)circle (.075in);
          \fill (.125,\x*\SS+.125)circle (.075in);
        }
      \fill (.375,.375)circle (.075in);
      \draw[very thick](0,0) rectangle(2*\SS,2*\SS);
    \end{scope}
    \begin{scope}[xshift=.75in]
      \draw[thin](0,0) grid[step=\SS in](3*\SS,3*\SS);
      \foreach \x in {0,...,2}{
          \fill (\x*\SS+.125,.125)circle (.075in);
          \fill (.125,\x*\SS+.125)circle (.075in);
        }
      \draw[very thick](0,0) rectangle(3*\SS,3*\SS);
    \end{scope}
    \begin{scope}[xshift=.75in,xshift=1in]
      \draw[thin](0,0) grid[step=\SS in](4*\SS,4*\SS);
      \foreach \x in {1,...,3}{
          \fill (\x*\SS+.125,.125)circle (.075in);
          \fill (.125,\x*\SS+.125)circle (.075in);
        }
      \draw[very thick](0,0) rectangle(4*\SS,4*\SS);
    \end{scope}
    \begin{scope}[xshift=.75in,xshift=1in,xshift=1.25in]
      \draw[thin](0,0) grid[step=\SS in](5*\SS,5*\SS);
      \foreach \x in {1,...,4}{
          \fill (\x*\SS+.125,.125)circle (.075in);
          \fill (.125,\x*\SS+.125)circle (.075in);
        }
      \draw[very thick](0,0) rectangle(5*\SS,5*\SS);
    \end{scope}
  \end{tikzpicture}
  \caption{Configurations of minimum total $2$-dominating sets of $K_n\Box K_n$ for $n=2,3,4,5$.} \label{fig:Conf3-6}
\end{figure}


\begin{theorem}\label{th:nn}
  For every $n\ge2$ we have
  \begin{equation}\label{eq:nn}
    \gamma_{2t}(K_n\Box K_n) = \left\{
    \begin{array}{ll}
      (3n)/2,\quad & \text{if } n\equiv 0 \,( \text{mod }\, 4), \\
      (3n+1)/2,\quad & \text{if } n\equiv1\,( \text{mod }\, 2), \\
      (3n+2)/2,\quad & \text{if } n\equiv2\,( \text{mod }\, 4).
    \end{array}
    \right.
  \end{equation}
\end{theorem}

\begin{proof}
  First we proceed by induction on $n$ for obtaining
  \begin{equation}\label{eq:=2}
    \gamma_{2t}(K_n\Box K_n) \ge \left\{
    \begin{array}{ll}
      6k-2,\quad & \text{if  } n=4k-2, \\
      6k-1,\quad & \text{if  } n=4k-1, \\
      6k,\quad   & \text{if  } n=4k,   \\
      6k+2,\quad & \text{if  } n=4k+1.
    \end{array}
    \right.
  \end{equation}
  By Proposition \ref{prop:n=3-6}, \eqref{eq:=2} holds for $k=1$.
  Assume that \eqref{eq:=2} holds for $k=r$. Hence, Theorem \ref{th:-3o4} gives
  \[
    \begin{aligned}
      \gamma_{2t}(K_{4r+2}\Box K_{4r+2}) & \ge \min\big\{5 + \gamma_{2t}(K_{4r-1}\Box K_{4r-1}), 6+\gamma_{2t}(K_{4r-2}\Box K_{4r-2})\big\}\ge 6r+4, \\
      \gamma_{2t}(K_{4r+3}\Box K_{4r+3}) & \ge \min\big\{5 + \gamma_{2t}(K_{4r}\Box K_{4r}), 6+\gamma_{2t}(K_{4r-1}\Box K_{4r-1})\big\}\ge 6r+5,     \\
      \gamma_{2t}(K_{4r+4}\Box K_{4r+4}) & \ge \min\big\{5 + \gamma_{2t}(K_{4r+1}\Box K_{4r+1}), 6+\gamma_{2t}(K_{4r}\Box K_{4r})\big\}\ge 6r+6,     \\
      \gamma_{2t}(K_{4r+5}\Box K_{4r+5}) & \ge \min\big\{5 + \gamma_{2t}(K_{4r+2}\Box K_{4r+2}), 6+\gamma_{2t}(K_{4r+1}\Box K_{4r+1})\big\}\ge 6r+8.
    \end{aligned}
  \]
  We continue in this fashion obtaining a configuration for $S$ that yields the equality by putting in diagonal matter $k-1$ configurations of $4\times 4$ blocks and another configuration with size congruent with $n$ modulo $4$.
  In other words, build $S$ for every $n=4k+\alpha$ with $k\ge1$ and $\alpha=-2,-1,0,1$ .
  Take $S_i$ as a minimum $2$-total domination set of the induced subgraph of $\{v_{4(i-1)+1},\ldots,v_{4i}\}\times\{w_{4(i-1)+1},\ldots,w_{4i}\} $ for $i\le k-1$ (if $k>1$) and  $S_k$ as a minimum $2$-total domination set for the induced subgraph of  $\{v_{4(k-1)+1},\ldots,v_{4k+\alpha}\}\times\{w_{4(k-1)+1},\ldots,w_{4k+\alpha}\} $.
  Finally, take $S:=\displaystyle\bigcup_{i=1}^{k} S_i$ which is a total $2$-dominating set of $K_n\Box K_n$.
\end{proof}

We can use Theorem \ref{th:+4} and mathematical induction to obtain close formulas for $\{\gamma_{2t}(K_n\Box K_{n+1})\}_{n=2}^{\infty}$ 
as well as other similar results. The proof of the following proposition is also recommended to the reader.

\begin{proposition}\label{prop:nxn+1-2}
  We have $\gamma_{2t}(K_2\Box K_3)=4$, $\gamma_{2t}(K_3\Box K_4)=6$, $\gamma_{2t}(K_4\Box K_5)=7$, $\gamma_{2t}(K_5\Box K_6)=9$, $\gamma_{2t}(K_6\Box K_7)=10$. 
\end{proposition}

\begin{theorem}\label{th:nn+1}
  For every $n\ge2$ we have
  \begin{equation}\label{eq:nn+1}
    \gamma_{2t}(K_n\Box K_{n+1}) = \left\{
    \begin{array}{ll}
      (3n+2)/2,\quad & \text{if } n\equiv 0 \,( \text{mod }\, 2), \\
      (3n+3)/2,\quad & \text{if } n\equiv1\,( \text{mod }\, 2).
    \end{array}
    \right.
  \end{equation}
\end{theorem}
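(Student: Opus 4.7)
My plan is to mirror the proof of Theorem \ref{th:nn}: use strong induction on $n$ with step $n\to n+4$ so that the parity class is preserved, deriving the lower bound from Theorem \ref{th:-3o4} and the upper bound from Theorem \ref{th:+4}, both anchored by Proposition \ref{prop:nxn+1-2}.

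For the lower bound, I would take $n\in\{2,3,4,5\}$ directly from Proposition \ref{prop:nxn+1-2} and, for $n\ge 6$, invoke Theorem \ref{th:-3o4} applied to $K_n\Box K_{n+1}$:
\[
\gamma_{2t}(K_n\Box K_{n+1})\ge \min\bigl\{5+\gamma_{2t}(K_{n-3}\Box K_{n-2}),\, 6+\gamma_{2t}(K_{n-4}\Box K_{n-3})\bigr\}.
\]
Both summands are of the form $\gamma_{2t}(K_a\Box K_{a+1})$, so the inductive hypothesis plugs in. A short parity check (using that $n-3$ and $n-4$ have opposite parities relative to $n$) shows the minimum equals $(3n+2)/2$ when $n$ is even and $(3n+3)/2$ when $n$ is odd, exactly matching the claim.

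For the upper bound, I would strengthen the inductive statement to include the existence of a minimum total $2$-dominating set of $K_n\Box K_{n+1}$ hitting every row and every column, so that Theorem \ref{th:+4} can be iterated. Each of the four base cases $n\in\{3,4,5,6\}$ admits such a configuration: for $n=3$ the set $\{(v_1,w_1),(v_1,w_2),(v_1,w_3),(v_1,w_4),(v_2,w_1),(v_3,w_1)\}$ has $6$ rooks and hits everything; for $n=4,5$ the analogous ``L-shape'' on row $v_1$ and column $w_1$ gives $7$ and $9$ rooks respectively; and for $n=6$ the configuration in Figure \ref{fig:67-68} (left) works with $10$ rooks. The case $n=2$ is supplied separately by Proposition \ref{prop:nxn+1-2} (the hypothesis of Theorem \ref{th:+4} genuinely fails there, as the paper notes). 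For $n\ge 7$ the induction gives both the formula and a row/column-hitting min TDS $S_0$ for $K_{n-4}\Box K_{n-3}$, and Theorem \ref{th:+4} yields $\gamma_{2t}(K_n\Box K_{n+1})=\gamma_{2t}(K_{n-4}\Box K_{n-3})+6$, matching the claim; the block-diagonal construction underlying Theorem \ref{th:+4} (take $S_0$ together with a row/column-hitting min TDS of $K_4\Box K_4$, placed in the four new rows and columns) produces a min TDS of $K_n\Box K_{n+1}$ which still hits every row and column, so the stronger inductive hypothesis propagates.

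The main obstacle I foresee is precisely the upper-bound base step: verifying that each of the four starting cases $n\in\{3,4,5,6\}$ admits a minimum total $2$-dominating set satisfying the row/column-hitting condition of Theorem \ref{th:+4}. The cautionary $n=2$ example right after Theorem \ref{th:+4} shows that minimum TDSs with this property need not exist in general, so the four explicit configurations above are essential. Once they are in place, both inductive steps are routine: the only invariant to track is the parity of $n$, and the two parity classes differ by only $1$ in the final formula.
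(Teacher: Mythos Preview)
Your proposal is correct and matches the approach the paper indicates: the paper does not write out a proof of Theorem~\ref{th:nn+1} but states explicitly that it follows from Theorem~\ref{th:+4} and induction, with the base cases supplied by Proposition~\ref{prop:nxn+1-2}. Your argument fills in precisely these details---the lower bound via Theorem~\ref{th:-3o4}, the upper bound via Theorem~\ref{th:+4} with the row/column-hitting hypothesis propagated through the block-diagonal construction, and the four base configurations for $n\in\{3,4,5,6\}$ (including the one in Figure~\ref{fig:67-68} left, which does hit every row and column)---so nothing is missing.
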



The following result is a direct application of the the recursive formulas in Theorems \ref{th:+4} and \ref{th:+13} with initial conditions given in Theorems \ref{th:nn} and \ref{th:nn+1}.
Note that by Proposition \ref{prop:+m_2n} we have that $\gamma_{2t}(K_n\Box K_m)=2n$ when $m\geq 3n$. Hence, for numerical reason, we can assume that $\gamma_{2t}(K_x\Box K_y)=0$ if $x\leq0$ or $y\leq0$.

\begin{theorem}\label{th:nm}
  For every $2\leq n\leq m$ we have 
  \begin{equation}\label{eq:nm}
    \gamma_{2t}(K_{n}\Box K_{m}) = \min\left\{\gamma_{2t}\left(K_{n-\left\lfloor\frac{m-n}2\right\rfloor}\Box K_{n-\left\lfloor\frac{m-n}2\right\rfloor+\left\{\left\lceil\frac{m-n}2\right\rceil-\left\lfloor\frac{m-n}2\right\rfloor\right\}}\right) + 3\left\lfloor\frac{m-n}2\right\rfloor,2n\right\}.
  \end{equation}
\end{theorem}

Hence, we can obtain a closed formula for the total $2$-domination number of $K_n\Box K_m$ for every $n,m\geq2$ using Theorem \ref{th:nm}
\[
  \gamma_{2t}(K_{n}\Box K_{m}) = \min\left\{\gamma_{2t}\left(K_{n-\left\lfloor\frac{m-n}2\right\rfloor}\Box K_{n-\left\lfloor\frac{m-n}2\right\rfloor+\left\{\left\lceil\frac{m-n}2\right\rceil-\left\lfloor\frac{m-n}2\right\rfloor\right\}}\right) + 3\left\lfloor\frac{m-n}2\right\rfloor,2n\right\}.
\]
We call $a=n-\left\lfloor\frac{m-n}2\right\rfloor$ and $b=3\left\lfloor\frac{m-n}2\right\rfloor$. Now, we have two cases, the case where $n,m$ have the same parity, \emph{i.e.}, $n\equiv m \, (\text{mod } 2)$, and the case where $n,m$ have opposing parity, \emph{i.e.}, $n\not\equiv m \, (\text{mod } 2)$. To refine that idea we let consider the quotient and remainder of $n,m$ by dividing by $8$, that is $n=8q_n+r_n$ and $m=8q_m+r_m$, for $0\leq r_n,r_m<8$. We check these in two cases.

Let $r_n\equiv r_m\, (\text{mod } 2)$. Then, $a=8q_n+r_n-\frac{8q_m+r_m-8q_n-r_n}{2}=12q_n-4q_m+\frac{3r_n-r_m}{2}$ and $b=12(q_m-q_n) + 3\cdot\frac{r_m-r_n}2$.
Now we find the quotient and remainder for the divisor of 4 to use previous theorems, so we get that $a= 4\left(3q_n-q_m+\left\lfloor\frac{\left(\tfrac{3r_n-r_m}{2}\right)}{4}\right\rfloor\right) +r_a$ where $0\leq r_a<4$ and $r_a\equiv \frac{3r_n-r_m}{2}\, (\text{mod } 4)$.
Now we use Theorem \ref{th:nn} to find $\gamma_{2t}(K_a\Box K_a)$ to get, for $a=4q_a+r_a$ where $q_a=\left(3q_n-q_m+\left\lfloor\frac{\left(\tfrac{3r_n-r_m}{2}\right)}{4}\right\rfloor\right)$
\[
  \gamma_{2t}(K_a\Box K_a)=
  \left\{
  \begin{array}{ll}
    6q_a    & r_a\equiv 0 \mod 4 \\
    6q_a +2 & r_a\equiv 1 \mod 4 \\
    6q_a +4 & r_a\equiv 2 \mod 4 \\
    6q_a+5  & r_a\equiv 3 \mod 4 \\
  \end{array}
  \right.
\]

In the case where $r_n\not\equiv r_m \, (\text{mod } 2)$, we have $a=8q_n+r_n-\lfloor\frac{8q_m+r_m-8q_n-r_n}{2}\rfloor=12q_n-4q_m+\frac{3r_n-r_m-1}{2}$ and $b=12(q_m-q_n) + 3\cdot\frac{r_m-r_n-1}2$. We follow the same method above instead using Theorem \ref{th:nn+1} with the expanded expressions of
\[
  \gamma_{2t}(K_a\Box K_{a+1})=
  \left\{
  \begin{array}{ll}
    6q_a+1 & r_a\equiv 0 \mod 4 \\
    6q_a+3 & r_a\equiv 1 \mod 4 \\
    6q_a+4 & r_a\equiv 2 \mod 4 \\
    6q_a+6 & r_a\equiv 3 \mod 4 \\
  \end{array}
  \right.
\]

Combining results above, we have for any given $n,m$, a look up Table \ref{closed_formula} for the total 2-domination number of $K_n\Box K_m$.

So for
\[
  \mathcal Q= 6q_n+6q_m 
\]

\begin{table}[h!]
  \centering
  \begin{tabular}{|c|r|r|r|r|r|r|r|r|}
    \hline
                       & $m\equiv0$       & $m\equiv1$       & $m\equiv2$       & $m\equiv3$       & $m\equiv4$       & $m\equiv5$       & $m\equiv6$        & $m\equiv7$        \\
    \hline
    $n\equiv 0 \mod 8$ & $\mathcal Q+ 0 $ & $\mathcal Q+ 1 $ & $\mathcal Q+ 2 $ & $\mathcal Q+ 3 $ & $\mathcal Q+ 4 $ & $\mathcal Q+ 4 $ & $\mathcal Q+ 5 $  & $\mathcal Q+ 6 $  \\
    $n\equiv 1 \mod 8$ & $\mathcal Q+ 1 $ & $\mathcal Q+ 2 $ & $\mathcal Q+ 3 $ & $\mathcal Q+ 3 $ & $\mathcal Q+ 4 $ & $\mathcal Q+ 5 $ & $\mathcal Q+ 6 $  & $\mathcal Q+ 7 $  \\
    $n\equiv 2 \mod 8$ & $\mathcal Q+ 2 $ & $\mathcal Q+ 3 $ & $\mathcal Q+ 4 $ & $\mathcal Q+ 4 $ & $\mathcal Q+ 5 $ & $\mathcal Q+ 6 $ & $\mathcal Q+ 6 $  & $\mathcal Q+ 7 $  \\
    $n\equiv 3 \mod 8$ & $\mathcal Q+ 3 $ & $\mathcal Q+ 3 $ & $\mathcal Q+ 4 $ & $\mathcal Q+ 5 $ & $\mathcal Q+ 6 $ & $\mathcal Q+ 7 $ & $\mathcal Q+ 7 $  & $\mathcal Q+ 8 $  \\
    $n\equiv 4 \mod 8$ & $\mathcal Q+ 4 $ & $\mathcal Q+ 4 $ & $\mathcal Q+ 5 $ & $\mathcal Q+ 6 $ & $\mathcal Q+ 6 $ & $\mathcal Q+ 7 $ & $\mathcal Q+ 8 $  & $\mathcal Q+ 9 $  \\
    $n\equiv 5 \mod 8$ & $\mathcal Q+ 4 $ & $\mathcal Q+ 5 $ & $\mathcal Q+ 6 $ & $\mathcal Q+ 7 $ & $\mathcal Q+ 7 $ & $\mathcal Q+ 8 $ & $\mathcal Q+ 9 $  & $\mathcal Q+ 9 $  \\
    $n\equiv 6 \mod 8$ & $\mathcal Q+ 5 $ & $\mathcal Q+ 6 $ & $\mathcal Q+ 6 $ & $\mathcal Q+ 7 $ & $\mathcal Q+ 8 $ & $\mathcal Q+ 9 $ & $\mathcal Q+ 10 $ & $\mathcal Q+ 10 $ \\
    $n\equiv 7 \mod 8$ & $\mathcal Q+ 6 $ & $\mathcal Q+ 7 $ & $\mathcal Q+ 7 $ & $\mathcal Q+ 8 $ & $\mathcal Q+ 9 $ & $\mathcal Q+ 9 $ & $\mathcal Q+ 10 $ & $\mathcal Q+ 11 $ \\
    \hline
  \end{tabular}
  \caption{Table with the minimum values of the total $2$-domination number of $K_n\Box K_m$ for $n,m\geq2$.}\label{closed_formula}
\end{table}

Thus, we can take the minimum of table entry above or $2n$ to get our value for $\gamma_{2t}(K_{n}\Box K_{m})$.


Theorems \ref{th:nn}, \ref{th:nn+1} and \ref{th:nm} have a direct consequence which is a general result which improves the result in \cite[Proposition 3.3]{BSS} for $k=2$.

\begin{theorem}\label{th:lowerbound}
  Let $G,H$ be two graphs without isolate vertex and order $n$ and $m$ respectively. Then
  \[
    \gamma_{2t}(G\Box H) \ge \displaystyle\frac{3}{2}\, \min\{n,m\}.
  \]
  Furthermore, if $n\le m$
  \[
    \displaystyle\liminf_{n\to \infty} \frac{\gamma_{2t}(G\Box H)}{n} = \frac32.
  \]
\end{theorem}


%

From the closed formula for $\{\gamma_{2t}(K_n\Box K_m)\}_{n,m\ge2}$ described in Table \ref{closed_formula} we have the following asymptotic behavior of the sequence.

\begin{theorem}\label{th:behaviornm}
  For natural numbers $n,m\ge2$ and a real number $\frac35\le\lambda\le\frac53$, if both $n$ and $m$ increase without bound with certain behavior such that $\displaystyle\frac{m}{n}\to\lambda$, then
  \begin{equation}\label{eq:behaviornm}
    \displaystyle \frac{\gamma_{2t}(K_n\Box K_m)}{n} \,\xrightarrow{\quad n,m\to\infty,\frac{m}{n}\to\lambda\quad} \, \frac{3(1+\lambda)}{4}.
  \end{equation}
\end{theorem}

Note that restriction for $\lambda$ follows from the fact $\gamma_{2t}(K_n\Box K_m)\le 2\min\{n,m\}$.

\begin{proof}
  The result follows directly from the facts that $n=8 q_n+r_n$ and $m=8 q_m+r_m$ with $0\le r_n,r_m<8$, and that from Table \ref{closed_formula} we have $\mathcal{Q}=\mathcal{Q}_{n,m}=6(q_n+q_m)$. Thus, we have
  \[
    \displaystyle \frac{\gamma_{2t}(K_n\Box K_m)}{n}=\frac{6(q_n+q_m)+s_{n,m}}{8 q_n+r_n}\quad \text{for some } 0\le s_{n,m}\le11.
  \]
  Therefore, we have
  \[
    \displaystyle \frac{\gamma_{2t}(K_n\Box K_m)}{n} \to \frac{6(q_n+q_m)+s_{n,m}}{8 q_n+r_n} \to \frac{6(1+\frac{q_m}{q_n})+\frac{s_{n,m}}{q_n}}{8 +\frac{r_n}{q_n}}  \to  \frac{3(1+\lambda)}{4}.
  \]
\end{proof}

\
\section{Total $3$-domination number of $K_n\Box K_m$}\label{sect:k=3}


In this section we investigate the case $k=3$. Given the geometric representation of $\gamma_{kt}\left(K_n\Box K_m\right)$, we conjecture the parity of $k$ will be important.
The following result improves Lemma \ref{l:UpperB} when $k=3$.

\begin{proposition}\label{prop:mink3}
  For every $m\ge n\ge3$ we have $\gamma_{3t}(K_n\Box K_m)\geq 2n+2$.
\end{proposition}

\begin{proof}
  Consider a minimum rook configuration in an $n\times m$ board.
  Notice that by Lemmas \ref{l:UpperB} and \ref{l:rooks} we have that there is at least a rook in each row and column.

  \smallskip

  Assume first that there is a row (column, resp.) with exactly one rook.
  Note that the squares in that row (column, resp.) without rook is dominated by at least other two rooks in the same column (row, resp.) and the square with the rook is dominated by at least other three rooks in the same column (row, resp.). Thus, we can count the rooks by columns (rows, resp.) and obtain that the number of rooks is at least $2m+2$ ($2n+2$, resp.).

  \smallskip

  Assume now that every row and column has at least two rooks. Seeking for a contradiction assume that the number of rooks is less than $2n+2$. Hence, we have that $n=m$. Now consider a square with one of the rooks. Since the square is dominated by other three rooks we have a row or column with exactly three rooks, and consequently, the total number of rooks is $2n+1$ and there is only one row (column, resp.) with exactly three rooks. Without loss of generality we can assume that the first row and column have three rooks each. Since $6<2n+1$, there is at least one rook out of firsts row and column; however, the square with that rook is dominating by other three rooks that is a contradiction with the only one row and column with three rooks are the firsts. Therefore, we have the result.
\end{proof}

The following result is a version of Theorem \ref{th:rows-cols} for $k=3$ with a lower upper bound.

\begin{proposition}\label{prop:nmk3}
  For every $2\le k< n\le m$, we have
  \[
    \gamma_{3t}(K_n\Box K_m)\leq (2m+n)-2\,\left\lfloor\frac{m-1}{3}\right\rfloor.
  \]
  Furthermore, if $n=m$, we have
  \[
    \gamma_{3t}(K_n\Box K_n)\leq 3n-2\,\left\lfloor\frac{n-1}{3}\right\rfloor.
  \]
\end{proposition}

\begin{proof}
  Start with the grid representation of $K_n\Box K_m$ with rooks populating like in Figure \ref{fig:nmk3}, \emph{i.e.}, put two rooks in the firsts $m-1$ columns as blocks of size $2\times 3$ (perhaps with one block of $2\times4$ or $2\times5$ when $m-1$ is not a multiple of $3$) like steps in a stair, and a block in the last column like a wall from the top of the stair up to the top right corner of the board (perhaps, up to two more rooks at the level of the upper step as needed in the case $n=m=4$). Note that gray rooks in Figure \ref{fig:nmk3} will be needed in some cases depending of particular $n$ and $m$.
  Therefore, we can easily verify that every square is dominated by at least three rooks.
  \[
    \gamma_{3t}(K_n\Box K_m)\leq 2\,m +n-2\,\left\lfloor\frac{m-1}{3}\right\rfloor.
  \]

  \begin{figure}[H]
    \centering
    \begin{tikzpicture}[x=.8in, y=.8in]
      \draw[thin,step=.25] (0,0)grid(4,4);
      \draw[thick] (0,0)rectangle(4,4);
      \foreach \x/\y in {
          $c_1$/0,
          $\cdots$/.25,
          $\cdots$/3.5,
          $c_m$/3.75
        }{
          \node[anchor=south] at (\y +.125,4) {\x};
        }
      \foreach \x/\y in {
          $f_1$/0,
          $\vdots$/.25,
          $\vdots$/3.5,
          $f_n$/3.75
        }{
          \node[anchor=east] at (0,3.875-\y) {\x};
        }
      \foreach \x in {0,1,2}{
          \fill (.125+.25*\x,.125) circle(.075);
          \fill (.125+.25*\x,.375) circle(.075);
          \fill (.875+.25*\x,.625) circle(.075);
          \fill (.875+.25*\x,.875) circle(.075);
          \fill (1.625+.25*\x,1.125) circle(.075);
          \fill (1.625+.25*\x,1.375) circle(.075);
          \foreach \x in {1,2,3,5,6,7}{\fill (2.25+\x/16,1.5+\x/16) circle (1pt);}
          \fill (3.125+.25*\x,2.125) circle(.075);
          \fill (3.125+.25*\x,2.375) circle(.075);
          \fill[color=gray] (2.875,2.125) circle(.075);
          \fill[color=gray] (2.875,2.375) circle(.075);
          \fill[color=gray] (3.875,2.125) circle(.075);
          \fill[color=gray] (3.875,2.375) circle(.075);
          \fill (3.875,2.625+.25*\x) circle(.075);
          \fill (3.875,3.375+.25*\x) circle(.075);
        }
    \end{tikzpicture}
    \caption{Total $3$-dominating configuration for $K_n\Box K_m$.} \label{fig:nmk3}
  \end{figure}
\end{proof}

The result above has the following consequence that improves Theorem \ref{th:rows-colsnxn} for $k=3$. It will be improved by Theorem \ref{th:behaviork3}.

\begin{corollary}\label{cor:behaviork3}
  Let $G,H$ be two graphs without isolate vertex and order $n$ and $m$, respectively. If $3< n\le m$ then
  \[
    \displaystyle\liminf_{n\to \infty}\; \frac{\gamma_{kt}(G\Box H)}{n} \le \frac{7}3.
  \]
\end{corollary}

\begin{proof}
  By obvious reason the minimum values of $\gamma_{kt}(G\Box H)$ is attained by $\gamma_{kt}(K_n\Box K_m)$. Besides, by natural graph inclusion without loss of generality we can assume that $n=m$. So, Proposition \ref{prop:nmk3} gives
  \[
    \gamma_{3t}(K_n\Box K_n)\leq 3n-2\,\left\lfloor\frac{n-1}{3}\right\rfloor,
  \]
  Thus, the result follows from taking limit as $n$ approaches to infinity on  $\displaystyle\frac{\gamma_{kt}(K_n\Box K_n)}{n}$.
\end{proof}

The configuration in Figure \ref{fig:nmk3} gives minimum total $3$-dominating sets for $n=4,5,\ldots,13$. In particular we have $\gamma_{3t}(K_4\Box K_4)=10$, $\gamma_{3t}(K_5\Box K_5)=12$, $\gamma_{3t}(K_6\Box K_6)=14$, $\gamma_{3t}(K_7\Box K_7)=16$, $\gamma_{3t}(K_8\Box K_8)=18$, $\gamma_{3t}(K_9\Box K_9)=21$, $\gamma_{3t}(K_{10}\Box K_{10})=22$, $\gamma_{3t}(K_{11}\Box K_{11})=25$, $\gamma_{3t}(K_{12}\Box K_{12})=28$ and $\gamma_{3t}(K_{13}\Box K_{13})=29$; \emph{i.e.},

\begin{equation}\label{eq:nnk3}
  \gamma_{3t}(K_n\Box K_n)=\left\{\begin{aligned} 2n+2 \qquad & \text{ for } n=4,5,6,7,8,10 \\
    2n+3 \qquad & \text{ for } n= 9,11,13     \\
    2n+4 \qquad & \text{ for } n= 12.
  \end{aligned}\right.
\end{equation}

\begin{theorem}\label{prop:nn3}
  For every $n\ge4$, we have
  \begin{equation}\label{eq:nn3}
    \gamma_{3t}(K_n\Box K_n)\leq\left\{
    \begin{aligned}
      22r+10 \qquad & \text{ for } n=10r+4   \\
      22r+12 \qquad & \text{ for } n=10r+5   \\
      22r+14 \qquad & \text{ for } n=10r+6   \\
      22r+16 \qquad & \text{ for } n=10r+7   \\
      22r+18 \qquad & \text{ for } n=10r+8   \\
      22r+21 \qquad & \text{ for } n=10r+9   \\
      22r+22 \qquad & \text{ for } n=10r+10  \\
      22r+25 \qquad & \text{ for } n=10r+11  \\
      22r+28 \qquad & \text{ for } n=10r+12  \\
      22r+29 \qquad & \text{ for } n=10r+13.
    \end{aligned}\right.
  \end{equation}
\end{theorem}

\begin{proof}
  Start with the grid representation of $K_n\Box K_n$.
  Configurations like the one in Figure \ref{fig:nmk3} for $k=3$ and $n=m$ holding values in $\{4,5,\ldots,13\}$ gives minimum number of rooks in a total $3$-dominating set of $K_n\Box K_n$ when $r=0$. Let us consider $r\ge1$.
  Now consider a finite sequence of $r$ identical configurations with size $10\times10$, like in Figure \ref{fig:nmk3}, locate into the main diagonal and add the minimum configuration for $n-10r$ into the main diagonal, too.
  This configuration is total $3$-dominating, and consequently, we obtain the desire result.
\end{proof}

We believe that \eqref{eq:nn3} gives a closed formula for $\gamma_{3t}(K_n\Box K_n)$ for $n\ge4$, but, it remains as an open problem on the topic.

\begin{corollary}\label{cor:nn3}
  For every $n\ge14$, we have
  \begin{equation}\label{eq_cor:nn3}
    \displaystyle\frac{\gamma_{3t}(K_n\Box K_n)}{n}< 2+\frac13.
  \end{equation}
\end{corollary}

\begin{lemma}\label{l:4x1}
  For every $n\ge 4$, there is a minimum configuration of Problem \ref{prob:board} for $k=3$ and $n=m$ where one of the rows (columns, resp.) has only one rook, and consequently, one of the columns (rows, resp.) has at least $4$ rooks.
\end{lemma}

\begin{proof}
  Start with the grid representation of $K_n\Box K_n$.
  Configurations of rooks placed into an $n\times n$ board like the one in Figure \ref{fig:nmk3} for $n$ holding values in $\{4,5,\ldots,13\}$ gives minimum rooks for Problem \ref{prob:board} with at least a row with exactly one rook. So, we can assume $n\ge14$.
  Now seeking for a contradiction we can assume that there is a minimum configuration that is a solution of Problem \ref{prob:board} where there are neither a row or column with exactly one rook.
  By \eqref{eq_cor:nn3}, we have that
  $\gamma_{3t}(K_n\Box K_n)\leq 7n/3$.
  Thus, by Lemma \ref{l:rooks}, there is at least a rook in each row and each column. Hence, we have there are at least $2$ rooks in each row and each column.
  Let $r_1$ ($r_2$, resp.) be the number of rows (columns, resp.) with exactly $2$ rooks. Notice that \eqref{eq_cor:nn3} gives $r_1,r_2\ge2n/3$ since no rows and no columns has exactly one rook. However, the squares in the intersection of the $r_1$ rows and the $r_2$ columns with exactly $2$ rooks have no rooks located therein, and consequently, the total number of rooks in the board is greater than or equal $2r_1+2r_2\ge 8n/3$.
  That is the contradiction we were looking for, and consequently, we obtain the result for $n\ge14$, too.
\end{proof}

\begin{theorem}\label{th:+14}
  For every $4\leq n\leq m$, we have
  \begin{equation}\label{eq:+14}
    \gamma_{3t}(K_{n+1}\Box K_{n+4}) \le \gamma_{3t}(K_{n}\Box K_{n})+4.
  \end{equation}
\end{theorem}

\begin{proof}
  Let $S^\prime$ be a minimum total $3$-dominating set of $K_n\Box K_m$. By Lemma \ref{l:4x1} there is a row with at least $4$ (four) rooks.
  Let us consider $A=\{(v_{n+1},w_{n+1})$, $(v_{n+1},w_{n+2})$, $(v_{n+1},w_{n+3})$, $(v_{n+1},w_{n+4})\}$.
  Thus it is a simple matter to check that $S^\prime\cup A$ is a total $3$-dominating set of $K_{n+1}\Box K_{m+4}$ obtaining $\gamma_{3t}(K_{n+1}\Box K_{m+4})\leq \gamma_{3t}(K_n\Box K_m)+4$.
\end{proof}

The following result improves Theorem \ref{th:rows-colsnxn} when $k=3$.

\begin{theorem}\label{th:behaviork3}
  Let $G,H$ be two graphs without isolate vertex and order $n$ and $m$, respectively. If $3< n\le m$ then
  \[
    \displaystyle\liminf_{n\to \infty}\; \frac{\gamma_{3t}(G\Box H)}{n} \le \frac{11}5.
  \]
\end{theorem}

\begin{proof}
  By obvious reason the minimum values of $\gamma_{kt}(G\Box H)$ is attained by $\gamma_{kt}(K_n\Box K_m)$ and $m=n$. So, Proposition \ref{prop:nn3} gives
  \[
    \displaystyle \frac{\gamma_{3t}(K_n\Box K_n)}{n}\le\frac{22r+x}{10r+y}
  \]
  with $r$ as in Theorem \ref{prop:nn3} and for some $10\le x\le 29$ and some $4\le y\le 13$.
  Thus, the result follows by taking (inferior) limit as $n$ approaches to infinity.
\end{proof}

Natural open problems in this context are

\begin{openprob}
  Find a closed formula for $\gamma_{kt}(K_n\Box K_m)$ for every even number $k\ge2$.
\end{openprob}

\begin{openprob}
  Find a closed formula for $\gamma_{kt}(K_n\Box K_n)$, or
  \[
    \displaystyle\lim_{n\to\infty}\frac{\gamma_{kt}(K_n\Box K_n)}{n} \quad \text{ for every odd number } k\ge3.
  \]
\end{openprob}

\section*{Acknowledgements}

The first author was supported by a grant from Agencia Estatal de Investigaci\'on (PID2019-106433GB-I00 /AEI/10.13039/501100011033), Spain.

\

\end{document}